\numberwithin{equation}{section}
\newtheorem{thm}[equation]{Theorem}
\newtheorem{prop}[equation]{Proposition}
\newtheorem{problem}[equation]{Problem}
\newtheorem{lemma}[equation]{Lemma}
\newtheorem{cor}[equation]{Corollary}
\theoremstyle{definition}
\newtheorem{rem}[equation]{Remark}
\newcommand{\K}{\operatorname{\mathrm{K}}}
\newcommand{\EU}{\operatorname{\mathrm{EU}}}
\newcommand{\EO}{\operatorname{\mathrm{EO}}}
\newcommand{\GG}{\operatorname{\mathrm{G}}}
\newcommand{\GL}{\operatorname{\mathrm{GL}}}
\newcommand{\Sp}{\operatorname{\mathrm{Sp}}}
\newcommand{\SL}{\operatorname{\mathrm{SL}}}
\newcommand{\St}{\operatorname{\mathrm{St}}}
\newcommand{\StP}{\operatorname{\hat{\mathrm{P}}}}
\newcommand{\StL}{\operatorname{\hat{\mathrm{L}}}}
\newcommand{\StU}{\operatorname{\hat{\mathrm{U}}}}
\newcommand{\SO}{\operatorname{\mathrm{SO}}}
\newcommand{\E}{\operatorname{\mathrm{E}}}
\newcommand{\M}{\operatorname{\mathrm{M}}}
\newcommand{\sr}{\operatorname{\mathrm{sr}}}
\newcommand{\asr}{\operatorname{\mathrm{asr}}}
\newcommand{\diag}{\operatorname{\mathrm{diag}}}
\newcommand{\rA}{\mathrm{A}}
\newcommand{\rB}{\mathrm{B}}
\newcommand{\rC}{\mathrm{C}}
\newcommand{\rD}{\mathrm{D}}
\newcommand{\rE}{\mathrm{E}}
\def\ssub#1{\mathchoice
   {_{\lower2pt\hbox{$\scriptstyle #1$}}}
   {_{\lower2pt\hbox{$\scriptstyle #1$}}}
   {_{\lower1.5pt\hbox{$\scriptscriptstyle #1$}}}
   {_{\!\lower1.5pt\hbox{$\scriptscriptstyle #1$}}}}
\def\haruu#1{\ar@{-}[ur]^{\scriptstyle #1}}
\def\hardu#1{\ar@{-}[dr]^{\scriptstyle #1}}
\def\harud#1{\ar@{-}[ur]_{\scriptstyle #1}}
\def\hardd#1{\ar@{-}[dr]_{\scriptstyle #1}}
\def\lnode#1{*[o]{\circ}}
\def\lwnode#1{*[o]{\composite{!{\circ}*h!<0pt,-1.0em>{\scriptscriptstyle #1}}}}
\def\lbnode#1{*[o]{\composite{!{\bullet}*h!<0pt,-1.0em>{\scriptscriptstyle #1}}}}
\def\harr#1{\ar@{-}[r]_{\scriptscriptstyle #1}}
\def\harl#1{\ar@{-}[l]^{\scriptscriptstyle #1}}
\title{Improved stability for odd-dimensional orthogonal group.}
\keywords {$K$-functors, stable rank, Chevalley groups, prestabilization theorems.
{\em Mathematical Subject Classification (2010):} 19B14; 20G35}
\author {Sergey Sinchuk}
\email {sinchukss {\it at} yandex.ru}
\date {\today}
\thanks {The author is grateful to N.~Vavilov, A.~Stepanov and A.~Luzgarev for their numerous comments which helped to improve the present text.
The work of the author has been supported by RFBR projects 11-01-00811, 13-01-91150, 13-01-92699, 
by the task projects 6.15.564.2013, 6.15.2174.2013 and by JSC "Gazprom Neft".}
\begin{document}

\begin{abstract}
We compute the kernel of the stabilization map for $\K_1$-functors modeled on split Chevalley groups of types $\rB_l, \rC_l, \rE_l$ one step below the stable range.
For the groups of type $\rB_l$ this implies early injective stability for $\K_1(\rB_l, R)$ over a certain class of rings.
\end{abstract}

\maketitle


\section {Introduction}\label{intro}

Let $R$ be a commutative ring with a unit and $\Phi_l$ be a reduced irreducible root system of rank $l>1$.
Denote by $\K_1(\Phi_l, R) = \GG(\Phi_l, R) / \E(\Phi_l, R)$ the quotient of the simply connected Chevalley group of type $\Phi_l$ over $R$ by the elementary subgroup.
Following \cite{St}, we call this group {\it the value of unstable $\K_1$-functor modeled on Chevalley group of type $\Phi_l$ on a ring $R$}.
An embedding of root systems $\Psi \hookrightarrow \Phi$ induces a map between the corresponding groups $\theta^{\K_1}_{\Psi\hookrightarrow\Phi}\colon\K_1(\Psi, R) \rightarrow \K_1(\Phi, R).$

Finding conditions on $R$ sufficient for the injectivity or surjectivity of $\theta$ is a classical problem, which dates back to H.~Bass' paper~\cite{Bs}.
In the case $\rA_{l-1}\hookrightarrow \rA_l$ such conditions are stated in terms of the stable rank of $R$ (see \cite{BMS}, \cite{Vas0}).
More precisely, the map $\theta_{\rA_{l-1}\hookrightarrow\rA_l}^{\K_1}$ is surjective when $\sr(R)\leq l$ and is injective when $\sr(R)\leq l-1$.
The case of a general $\Phi$ has been exhaustively studied by M.~Stein in \cite{St}.

It is natural to attempt to find additional assumptions on $R$ which imply bijectivity of $\theta^{\K_1}$ below the stable range,
i.\,e. in the situation when it is not possible to apply the stability theorems of Bass, Vaserstein and Stein directly.
For example, in \cite{RK} it has been shown that the map $\theta^{\K_1}_{\rA_{l-1}\hookrightarrow\rA_l}$ is bijective, 
when $R$ is a nonsingular affine algebra of dimension $d$ over a perfect $C_1$-field and $l\geq d+1$.

On the other hand, one may attempt to describe the generators of the kernel of $\theta^{\K_1}_{\Psi\hookrightarrow\Phi}$ explicitly.
Let us state (the absolute case) of the main result of \cite{Ka}, the so-called ``presta\-bilization'' theorem.

Denote by $\widetilde{\E}(n, R)$ the normal closure (inside $\GL(n, R)$) of the subgroup spanned by the subgroup $\E(n, R)$, 
the mixed commutator subgroup $[\E(n, R), \GL(n, R)]$ (which is not contained in $\E(n, R)$ only for $n=2$),
and the generators of the form $(e_n + xy)(e_n + yx)^{-1},$ where $y=\diag(\xi, 1, \ldots, 1),\ \xi\in R$, $x\in\M(n,R)$ are such that $e_n+xy\in \GL(n, R).$

\begin{thm} \label{prestA} 
 Let $R$ be a commutative ring and $\max(\sr(R), 2)\leq n.$
 Then one has $$\GL(n, R) \cap \E(n+1, R) = \widetilde{\E}(n, R).$$
 In other words, the kernel of $\theta^{\K_1}_{\rA_{n-1}\hookrightarrow\rA_n}$ is generated by $\widetilde{\E}(n, R)/\E(n, R)$.
\end{thm}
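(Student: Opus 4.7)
The plan is to handle the two inclusions separately.

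For $\widetilde{\E}(n, R) \subseteq \GL(n, R) \cap \E(n+1, R)$, I would verify that each of the three types of generators of $\widetilde{\E}(n, R)$ lies in $\E(n+1, R)$, and then invoke Suslin's normality theorem (applicable since $n+1 \geq 3$) to conclude that the normal closure in $\GL(n, R)$ still sits inside $\E(n+1, R)$. The containment $\E(n, R) \subseteq \E(n+1, R)$ is immediate. The inclusion $[\E(n, R), \GL(n, R)] \subseteq \E(n+1, R)$ follows via $[\E(n, R), \GL(n, R)] \subseteq [\E(n+1, R), \GL(n+1, R)] \subseteq \E(n+1, R)$. For the third generator $(e_n + xy)(e_n + yx)^{-1}$ with $y = \diag(\xi, 1, \ldots, 1)$, I would exploit that $y - e_n$ has rank one and is supported in the $(1,1)$ entry, so that the Whitehead--Vaserstein identity, which generically requires $2n$ dimensions, compresses into a product of elementary transvections along the $(n+1)$-th row and column only.

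For the reverse inclusion $\GL(n, R) \cap \E(n+1, R) \subseteq \widetilde{\E}(n, R)$, I would take $g \in \GL(n, R) \cap \E(n+1, R)$ and set $g' = \diag(g, 1)$. Fixing a decomposition $g' = \epsilon_1 \cdots \epsilon_N$ into elementary transvections of $\E(n+1, R)$, the plan is to reorganize it so that all factors $\epsilon_i$ touching the $(n+1)$-th row or column are collected into a single block at one end of the product, leaving a residue in $\E(n, R)$. Each commutation move generates a correction term that either lies in $[\E(n, R), \GL(n, R)]$ by the Chevalley commutator formula, or remains inside the strip formed by the $(n+1)$-th row and column and is dealt with at a later step; both classes of corrections are absorbed into $\widetilde{\E}(n, R)$.

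The central technical ingredient, where the hypothesis $\sr(R) \leq n$ is indispensable, is a reduction lemma for the collected tail of $(n+1)$-touching transvections. Since this tail must preserve the last column of $\diag(g, 1)$, its shape is very restricted; applying the stable range condition to replace a unimodular vector of length $n+1$ by one of length $n$ lets one trivialize the tail up to a correction of the form $(e_n + xy)(e_n + yx)^{-1}$, where $y = \diag(\xi, 1, \ldots, 1)$ and $\xi$ records precisely the stable-rank coefficient used. The main obstacle I anticipate is the combinatorial bookkeeping: verifying that each intermediate commutation produces a correction inside $\widetilde{\E}(n, R)$ and nothing larger. A secondary subtlety is the case $n = 2$, where $\E(2, R)$ need not be normal in $\GL(2, R)$; it is precisely this case that forces the mixed commutator subgroup $[\E(n, R), \GL(n, R)]$ into the generating set for $\widetilde{\E}(n, R)$.
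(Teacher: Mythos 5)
The first thing to say is that the paper does not prove this statement at all: Theorem~\ref{prestA} is quoted as the absolute case of the main result of van der Kallen's paper \cite{Ka} (itself a refinement of Vaserstein's prestabilization theorem) and is used as a black box to deduce Corollary~\ref{earlyB}. There is therefore no internal proof to compare yours against, and I can only assess your sketch on its own terms.

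Your easy inclusion $\widetilde{\E}(n, R) \subseteq \GL(n, R) \cap \E(n+1, R)$ is essentially right: the first two families of generators are handled exactly as you say, Suslin's normality theorem (applicable since $n+1 \geq 3$) absorbs the normal closure taken inside $\GL(n,R)$, and the membership of $(e_n + xy)(e_n + yx)^{-1}$ in $\E(n+1, R)$ for $y = \diag(\xi, 1, \ldots, 1)$ is a known, if fiddly, computation exploiting that $y - e_n$ has rank one. The genuine gap is in the reverse inclusion. You start from an \emph{arbitrary} factorization of $\diag(g,1)$ into elementary transvections and propose to commute all factors touching the last row or column into a single block, absorbing correction terms as you go. As stated this is not a workable plan: an arbitrary word in elementary generators carries no control (its length and the entries involved are unbounded), the proposed commutation process has no visible termination or invariant, and the hypothesis $\sr(R) \leq n$ plays no role until the final step, at which point the collected ``tail'' is just an arbitrary element of a parabolic-type subgroup and the stable range condition alone cannot reduce it modulo $\widetilde{\E}(n, R)$. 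The actual proofs of Vaserstein and van der Kallen run in the opposite order: they first use $\sr(R) \leq n$ to establish a bounded factorization of the whole group $\E(n+1, R)$ into unipotent and Levi-type pieces --- the same Dennis--Vaserstein-style decomposition that Proposition~\ref{DVfactor} of this paper establishes for types $\rB_l, \rC_l, \rE_l$ --- and only then intersect with the stabilizer of the last basis vector, identifying the resulting obstruction with the generators $(e_n + xy)(e_n + yx)^{-1}$. Without such a global normal form your bookkeeping has nothing to anchor it, so the hard direction of your argument is missing its central ingredient rather than merely its details.
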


The main purpose of the present article is to obtain an analogue of this theorem for split simple Chevalley groups of type $\rB_l, \rC_l, \rE_l$.

Fix a basis of simple roots $\Pi=\{\alpha_1,\ldots,\alpha_l\}$ for $\Phi_l$. The simple roots are numbered as in~\cite{Bou}.
Denote by $\Delta_i$ the subsystem of $\Phi_l$ spanned by all simple roots except the $i$-th one.

Set $i=l$, $j=1$ for $\Phi_l=\rB_l,\rC_l$ and $i=1$, $j=l$ for $\Phi_l=\rE_l$. We use this notation throughout the rest of the article.
Clearly, the subsystems $\Delta_i$ are of type $\rA_{l-1}$ in the case $\Phi_l=\rB_l, \rC_l$ and of type $\rD_{l-1}$ in the case $\Phi_l=\rE_l$.
On the other hand, the subsystems $\Delta_j$ have the same type as $\Phi_l$ (except for $\Phi_l=\rB_2, \rC_2, \rE_6$).

\begin{thm} \label{prestBC} 
Let $R$ be a commutative ring. Assume that one of the following holds:
\begin{enumerate}
 \item $\Phi_l = \rB_l, \rC_l$ and $\mathrm{max}(\sr(R), 2)\leq l-1;$
 \item $\Phi_l = \rE_l$, $l=6,7,8$ and $\asr(R)\leq l-2.$
\end{enumerate} 
Condiser the following diagram of unstable $\K_1$-groups induced by the natural embeddings of root systems.
$$ \xymatrix{ \mathrm{Ker}(\theta_1) \ar@{^{(}->}[r] \ar[d]_{\theta_2''} & \K_1(\Delta_i\cap\Delta_j, R) \ar[r]_{\theta_1} \ar[d]_{\theta_2} & \K_1(\Delta_i, R) \ar[d]_{\theta_2'} \\
              \mathrm{Ker}(\theta_1') \ar@{^{(}->}[r] & \K_1(\Delta_j, R) \ar[r]_{\theta_1'} & \K_1(\Phi_l, R)} $$
Then the map $\theta_2''$ is surjective. \end{thm}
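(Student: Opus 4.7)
The plan is to prove surjectivity of $\theta_2''$ by an explicit lifting, in the spirit of Theorem~\ref{prestA}. A class in $\Ker(\theta_1')$ is represented by some $g \in \GG(\Delta_j, R) \cap \E(\Phi_l, R)$, and the goal is to produce $h \in \GG(\Delta_i \cap \Delta_j, R) \cap \E(\Delta_i, R)$ with $g h^{-1} \in \E(\Delta_j, R)$; the class of $h$ in $\K_1(\Delta_i \cap \Delta_j, R)$ is then a preimage of the given class under $\theta_2''$.

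I would work inside $\GG(\Delta_j, R)$, which has the same type as $\Phi_l$ in the cases covered by the hypothesis. Inside this group, consider the maximal parabolic $P = L \ltimes U$ corresponding to removing the simple root $\alpha_i$ of $\Delta_j$, so that $L \supset \GG(\Delta_i \cap \Delta_j, R)$. The opposite unipotent radical $U^-$ carries the structure of a fundamental $L$-module: the natural module of rank roughly $l-1$ in the $\rB_l, \rC_l$ cases (where $L$ has type $\rA_{l-2}$), and a half-spin-type module in the $\rE_l$ case (where $L$ has type $\rD_{l-2}$). The element $g$ determines a ``column'' $v(g) \in U^-$ via its action on a chosen highest-weight vector $v_0$, and $g \in P$ iff $v(g) = v_0$. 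The hypothesis $g \in \E(\Phi_l, R)$, which is strictly stronger than $g \in \E(\Delta_j, R)$, should translate via a root-theoretic analysis of $\Phi_l \setminus \Delta_j$ into $v(g)$ being unimodular in $U^-$, i.e., lying in the $\GG(\Delta_j, R)$-orbit of $v_0$ over every localization.

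The central step is a column-reduction lemma: under the rank hypothesis, I would show that right-multiplication by suitable elements of $\E(\Delta_j, R)$ can transform $v(g)$ back into $v_0$. Once achieved, the modified element lies in $P \cap \GG(\Delta_j, R)$, and a further elementary reduction pushes it into $L \cap \GG(\Delta_j, R) = \GG(\Delta_i \cap \Delta_j, R) \cdot T$, where the torus factor $T$ is itself contained in $\E(\Delta_j, R)$ at this stable range. The main obstacle lies precisely in this column-reduction: for $\rB_l, \rC_l$ the column is a unimodular vector in the natural module and classical Vaserstein-style arguments suffice under $\sr(R) \leq l-1$, but for $\rE_l$ the column is a unimodular vector in a half-spin module constrained by the defining quadratic relations of the corresponding Freudenthal-type variety, and establishing transitivity of the $\E(\Delta_j, R)$-action requires simultaneous control at all maximal ideals of $R$, which is what $\asr(R) \leq l-2$ is designed to provide. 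The final verification that $h \in \E(\Delta_i, R)$ will follow from $h \in \E(\Phi_l, R) \cap \GG(\Delta_i, R)$ combined with the standard injective stability theorem for the inclusion $\Delta_i \hookrightarrow \Phi_l$ under our rank hypotheses.
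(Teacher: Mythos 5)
Your strategy is genuinely different from the paper's, and it has two concrete gaps. The first is in the column-reduction step. For any $g\in\GG(\Delta_j,R)$ the vector $g\cdot v_0$ is automatically unimodular and lies in the $\GG(\Delta_j,R)$-orbit of $v_0$ (locally and globally), simply because $g$ is invertible; so your reduction uses nothing from the hypothesis $g\in\E(\Phi_l,R)$ and is really an attempt to prove surjectivity of $\theta_2\colon\K_1(\Delta_i\cap\Delta_j,R)\to\K_1(\Delta_j,R)$ --- a strictly stronger statement than the theorem. It requires transitivity of $\E(\Delta_j,R)$ on the orbit of the highest weight vector of the \emph{smaller} group: for $\Phi_l=\rB_l$ this is transitivity of $\EO(2l-1,R)$ on unimodular isotropic vectors, which needs conditions of the strength of $\asr(R)\leq l-2$ and does not follow from $\sr(R)\leq l-1$; for $\rE_l$ you concede you cannot establish it. (Also, the unipotent radical $U^-$ of the parabolic you chose is a two-step nilpotent group, not the natural module, and $g\cdot v_0$ lives in the representation space, not in $U^-$.) The paper avoids all of this by proving a Dennis--Vaserstein factorization $\St(\Phi_l,R)=\StU\cdot\StU^-\cdot\StL_i\cdot\StP_j$ of the \emph{ambient} Steinberg group (Proposition~\ref{DVfactor}), where the stability hypotheses are consumed only to clear a single coordinate of $\varphi(a)v^+$ for $a\in\StL_i$ in the classical cases (Lemma~\ref{simpleLemma}), or the lower half of a $2l$-column in the $\rE_l$ cases (Lemma~\ref{asrLemma}); the membership $g\in\GG(\Delta_j,R)$ is then exploited only through the fact that $\varphi(g)$ fixes $v^+$, which forces the $\StU_i^-$-factor of $g$ to collapse (Lemma~\ref{DV_reduction}).

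The second gap is the final step: from $h\in\GG(\Delta_i\cap\Delta_j,R)\cap\E(\Phi_l,R)$ you conclude $h\in\E(\Delta_i,R)$ by invoking ``standard injective stability'' for $\Delta_i\hookrightarrow\Phi_l$, i.e.\ injectivity of $\theta_2'$ on the image of $\theta_1$. No such standard theorem is available under the stated hypotheses, which sit one step below the range of Stein's injectivity results; indeed, the possible failure of exactly this kind of injectivity is what makes the prestabilization problem nontrivial (compare the appearance of $\widetilde{\E}(l,R)\supsetneq\E(l,R)$ in Corollary~\ref{earlyB} and the negative result of \cite{RBJ} discussed in Section~\ref{remarks}). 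Without an independent argument here, the proof does not close; with it, most of the content of the theorem would already be trivialized by a diagram chase. The paper never needs this step because its Lemma~\ref{DV_reduction} directly produces a factorization $g=h\cdot e$ with $h\in\StL_i\cap\varphi^{-1}(G')$ and $e\in\StL_j$, so that $h$ lands in $\E(\Delta_i,R)\cap\GG(\Delta_i\cap\Delta_j,R)$ by construction rather than by an appeal to injectivity.
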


Denote by $\theta$ the map $\GG(\rA_{l-1}, R)\hookrightarrow \GG(\Phi_l, R)$ induced by the embedding $\rA_{l-1}=\Delta_l\hookrightarrow \rD_l$
of root systems. From theorems~\ref{prestA}--\ref{prestBC} and the main result of \cite{RK} one can immediately deduce the following statement.
\begin{cor} \label{earlyB} \strut  \begin{enumerate} 
 \item Assume that $\max(2, \sr(R))\leq l$ and $\Phi_l=\rB_l, \rC_l$. Then  
 $$\GG(\rB_l, R)\cap\E(\rB_{l+1}, R) = \theta(\widetilde{\E}(l, R))\cdot\E(\rB_l, R).$$
 \item Assume that $R$ is a nonsingular algebra of dimension $d\geq 2$ over a perfect $C_1$-field. 
       Then the map $\K_1(\rB_{d+1}, R) \rightarrow \K_1(\rB_{d+2}, R)$ is an isomorphism
\end{enumerate}
\end{cor}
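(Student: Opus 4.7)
My plan is that, once Theorem~\ref{prestBC} is in hand, both parts of the corollary reduce to a short diagram chase combined with standard stability results. For Part~(1) I will apply Theorem~\ref{prestBC}(1) shifted by one step in $l$, and then identify the resulting top-row kernel via Theorem~\ref{prestA}. For Part~(2) I will feed the bijectivity from~\cite{RK} into Part~(1).

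Concretely, for Part~(1) I apply Theorem~\ref{prestBC}(1) with $l$ replaced by $l+1$; the required condition $\max(\sr(R), 2)\leq l$ is precisely our hypothesis. In the notation of that theorem $i=l+1$ and $j=1$, so $\Delta_i$ is of type $\rA_l$, $\Delta_j$ of type $\rB_l$, and $\Delta_i\cap\Delta_j$ of type $\rA_{l-1}$. Surjectivity of $\theta_2''$ then asserts that every class in the kernel of $\K_1(\rB_l, R)\rightarrow \K_1(\rB_{l+1}, R)$ lifts, along the embedding $\theta\colon \GG(\rA_{l-1}, R)\hookrightarrow \GG(\rB_l, R)$, to a class in the kernel of $\K_1(\rA_{l-1}, R)\rightarrow \K_1(\rA_l, R)$. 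I next invoke Theorem~\ref{prestA} with $n=l$ to identify this upstairs kernel as $\widetilde{\E}(l, R)/\E(l, R)$. Translating the downstairs kernel as $(\GG(\rB_l, R)\cap \E(\rB_{l+1}, R))/\E(\rB_l, R)$ then yields the claimed equality; the $\rC_l$ case is strictly parallel.

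For Part~(2) I verify injectivity and surjectivity of $\K_1(\rB_{d+1}, R)\rightarrow \K_1(\rB_{d+2}, R)$ separately. Surjectivity follows from Stein's stability theorem~\cite{St}, applicable because Bass's bound yields $\sr(R)\leq d+1$ for a nonsingular $d$-dimensional affine algebra. For injectivity, I apply Part~(1) with $l = d+1$ (its hypotheses hold since $d\geq 2$ and $\sr(R)\leq d+1$), obtaining
$$\Ker(\theta^{\K_1}_{\rB_{d+1}\hookrightarrow\rB_{d+2}}) = \theta(\widetilde{\E}(d+1, R))\cdot\E(\rB_{d+1}, R)/\E(\rB_{d+1}, R).$$
By Theorem~\ref{prestA}, $\widetilde{\E}(d+1, R)/\E(d+1, R)$ is precisely $\Ker(\theta^{\K_1}_{\rA_d\hookrightarrow\rA_{d+1}})$, and the main result of~\cite{RK} (with $l=d+1$) forces this kernel to be trivial. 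Hence $\widetilde{\E}(d+1, R) = \E(d+1, R)$, and the displayed kernel collapses to the identity.

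I do not anticipate a genuine obstacle: the essential content — lifting kernels from the $\rB_l$-level to the $\rA_{l-1}$-level — is precisely what Theorem~\ref{prestBC} provides, while everything else is notational bookkeeping and invocations of classical results. The step requiring the most care is tracking index shifts (applying Theorem~\ref{prestBC} to $\rB_{l+1}$ rather than $\rB_l$ in Part~(1), and setting $l = d+1$ in Part~(2)), together with a one-line check that Bass's bound $\sr(R)\leq d+1$ meets the numerical hypothesis at each step.
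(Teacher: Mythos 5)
Your proposal is correct and is exactly the deduction the paper intends (the paper itself leaves it implicit, stating only that the corollary follows from Theorems~\ref{prestA}--\ref{prestBC} and the main result of~\cite{RK}): apply Theorem~\ref{prestBC} to $\rB_{l+1}$ so that $\Delta_i\cap\Delta_j$ is of type $\rA_{l-1}$, identify $\Ker(\theta_1)$ via Theorem~\ref{prestA}, and for part~(2) combine Stein's surjective stability with the vanishing of $\widetilde{\E}(d+1,R)/\E(d+1,R)$ supplied by~\cite{RK}. The index bookkeeping and the verification of the numerical hypotheses are all handled correctly.
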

The definition of $C_1$-field can be found in~\cite[3.2~Ch.~II]{Se}.
In fact, the main result of \cite{RK} (and hence the above statement) holds under some weaker assumptions on $R$ (see~\cite[Prop.~3.1]{RK}).

Another consequence of the Theorem~\ref{prestBC} is the following result which establishes a connection between stability problems for the embeddings $\rD_5\hookrightarrow\rE_6$ and $\rD_6\hookrightarrow\rE_7$.
\begin{cor} \label{prestE}
Let $R$ be a commutative noetherian ring such that $\mathrm{dimMax}\leq 4$.
\begin{enumerate} \item There exists the following commutative diagram with exact rows.
$$ \xymatrix{ \mathrm{Ker}(\theta_1) \ar@{^{(}->}[r] \ar@{->>}[d]_{\theta_2''} & \K_1(\rD_5, R) \ar@{->>}[r]_{\theta_1} \ar[d]_{\theta_2} & \K_1(\rD_6, R) \ar[d]_{\theta_2'} \\
              \mathrm{Ker}(\theta_1') \ar@{^{(}->}[r] & \K_1(\rE_6, R) \ar@{->>}[r]_{\theta_1'} & \K_1(\rE_7, R)}$$
\item \label{pt2} The map $\mathrm{Ker}(\theta_2)\rightarrow \mathrm{Ker}(\theta_2')$ induced by $\theta_1$ is an epimorphism.
\item \label{pt3} The map $\K_1(\rE_6, R)/\mathrm{Im}(\theta_2)\cong \K_1(\rE_7, R)/\mathrm{Im}(\theta_2')$ induced by $\theta_1'$ is an isomoprhism of pointed sets.
\end{enumerate} \end{cor}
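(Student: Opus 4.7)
The plan is to deduce Corollary~\ref{prestE} from Theorem~\ref{prestBC} applied to $\Phi_l=\rE_7$, combined with Stein's classical $\K_1$-surjectivity theorems and elementary diagram chasing.

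First I would convert the hypothesis $\mathrm{dimMax}(R)\leq 4$ into the bound $\asr(R)\leq 5$ via the standard Estes--Ohm type estimate $\asr(R)\leq\mathrm{dimMax}(R)+1$ for commutative noetherian rings. Since $5=l-2$ for $l=7$, Theorem~\ref{prestBC}(2) applies and yields the surjectivity of $\theta_2''$. The same bound, combined with Stein's surjective $\K_1$-stability theorem applied to the embeddings $\rD_5\hookrightarrow\rD_6$ and $\rE_6\hookrightarrow\rE_7$, gives the surjectivity of $\theta_1$ and $\theta_1'$. Commutativity of the square is immediate from the functoriality of $\K_1$ under inclusions of root subsystems, so this establishes part (1).

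For part (2) I would perform the standard diagram chase. Given $y\in\Ker(\theta_2')$, lift it through $\theta_1$ to some $x_0\in\K_1(\rD_5,R)$; by commutativity $\theta_2(x_0)\in\Ker(\theta_1')$, so the surjectivity of $\theta_2''$ produces $x_1\in\Ker(\theta_1)$ with $\theta_2(x_1)=\theta_2(x_0)$, and the corrected element $x:=x_0x_1^{-1}$ lies in $\Ker(\theta_2)$ and maps to $y$ under $\theta_1$. Part (3) is a parallel chase: the induced map between quotients is well-defined by commutativity of the square and surjective because $\theta_1'$ is; for injectivity, if $\theta_1'(x)=\theta_2'(z)$ for some $z\in\K_1(\rD_6,R)$, lift $z$ through $\theta_1$ to $z'\in\K_1(\rD_5,R)$ and apply the surjectivity of $\theta_2''$ to $x\,\theta_2(z')^{-1}\in\Ker(\theta_1')$ to produce $w\in\Ker(\theta_1)$ with $\theta_2(wz')=x$.

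The entire substantive content is packaged in Theorem~\ref{prestBC}; once that theorem is at hand, Corollary~\ref{prestE} follows by purely formal manipulation, and I would not expect any genuine obstacle beyond bookkeeping of the diagram.
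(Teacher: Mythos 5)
Your argument is correct and takes essentially the same route as the paper: surjectivity of $\theta_2''$ comes from Theorem~\ref{prestBC} via $\asr(R)\leq\mathrm{dimMax}(R)+1\leq 5=l-2$, surjectivity of $\theta_1$ and $\theta_1'$ from the classical surjective stability theorems, and parts (2)--(3) are the nonabelian snake lemma, which you simply spell out as a diagram chase. The one small inaccuracy is attributing the surjectivity of $\theta_1'$ for $\rE_6\hookrightarrow\rE_7$ to Stein; the paper invokes Plotkin's theorem for that embedding (Stein's general bound for $\rE_7$ is not sharp enough under $\mathrm{dimMax}(R)\leq 4$), while Stein covers $\rD_5\hookrightarrow\rD_6$.
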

\begin{proof} Under the above assumption on $R$ the maps $\theta_1$, $\theta_1'$ are epimorphisms by \cite[Cor.~3.2]{St}, \cite[Th.~1]{Pl}.
Statements \ref{pt2}--\ref{pt3} follow from the nonabelian snake lemma. \end{proof}

The proof of our main results follows \cite{St} and is essentially based on the calculations with elementary root unipotents
and so-called ``stable'' calculations in the representations of Chevalley groups, i.\,e. the calculations with the highest weight vector.

\section {Principal notation}\label{not}

By commutator $[x,y]$ of elements $x$, $y$ we always mean the left-normed commutator $xyx^{-1}y^{-1}$.
We denote by $x^y$ the conjugate element $y^{-1}xy$.

For any collection of subsets $H_1,\ldots, H_n$ of a group $G$ we denote by $H_1\ldots H_n$ their Minkowski set-product,
i.\,e. the set consisting of arbitrary products $h_1\ldots h_n$ of elements $h_i\in H_i$. In particular, the equality
$G = H_1\cdot\ldots\cdot H_n$ means that every element $g\in G$ can be presented as a product $h_1\ldots h_n$ for $h_i\in H_i$.

We denote by $H\ltimes N$ the semidirect product of groups $H$ and $N$ such that $N$ is a normal subgroup in $H\ltimes N$.

\subsection[ssStein]{Chevalley groups and Steinberg groups}
Our treatment of Steinberg groups, Chevalley groups and their representations follows \cite{PSV}, \cite{V}, \cite{VP}.

Let $\Phi$ be a reduced irreducible system of rank $l$ and let $\Pi=\{\alpha_1,\ldots\alpha_{l}\}$ be some fixed basis of simple roots of $\Phi$.
Denote by $\Phi^+$ and $\Phi^-$ the subsets of positive and negative roots of $\Phi$ with respect to $\Pi$.
Denote by $m_r(\alpha)$ the $r$-th coefficient of the expansion of $\alpha$ in $\Pi$, i.\,e. $\alpha = \sum_r m_r(\alpha) \alpha_r$.
Obviously, the condition $m_r(\alpha)=0$ is equivalent to $\alpha\in\Delta_r$.

Denote by $\GG(\Phi, -)$ the simply connected Chevalley---Demazure group scheme with the root system $\Phi$, and by $\E(\Phi, -)$ its elementary subfunctor.
For $l\geq 2$ Taddei's normality theorem (see \cite[Th.~0.3]{Ta}) asserts that $\E(\Phi, R)\trianglelefteq\GG(\Phi, R)$.

The elementary subgroup is generated by the {\it elementary root unipotents} $t_\alpha(\xi)$ for all $\alpha\in\Phi$, $\xi\in R$.
These elements satisfy the Steinberg relations:
\begin{equation}\label{R1} t_\alpha(s) t_\alpha(t) = t_\alpha(s+t), \end{equation} 
\begin{equation}\label{R2} [t_\alpha(s),  t_\beta(t)] = \prod t_{p\alpha + q\beta}(N_{\alpha,\beta, p, q} s^p t^q),\quad \alpha\neq-\beta. \end{equation}
In the above formula $\alpha,\beta\in\Phi$, $s,t\in R$, and $p$, $q$ run over all positive integers such that $p\alpha + q\beta\in\Phi$.
The constants $N_{\alpha, \beta, p, q}$ are small integers which can be explicitly computed and only depend on $\Phi$.

The {\it Steinberg group} $\St(\Phi, R)$ is defined by the generators $x_{\alpha}(\xi)$, $\alpha\in\Phi$, $\xi\in R$ and relations
\ref{R1}, \ref{R2} with the $t_\alpha(\xi)$'s replaced by $x_\alpha(\xi)$'s. Basic properties of these groups are discussed in \cite{St2}.

We denote by $X_\alpha$ the root subgroup corresponding to the root $\alpha$, i.\,e. the subgroup consisting of the root unipotents $x_{\alpha}(\xi)$, $\xi\in R$.

An embedding of root systems $\Psi \subseteq \Phi$ induces the natural transformations 
$$\theta_{\Psi\hookrightarrow\Phi}\colon\GG(\Psi, -)\rightarrow \GG(\Phi, -),\ \theta^{\St}_{\Psi\hookrightarrow\Phi}\colon\St(\Psi, -) \rightarrow \St(\Phi, -).$$
Notice that the maps $\theta^{\St}_{\Psi\hookrightarrow\Phi}(R)$ are not injective in general.

For $1\leq r\leq l$ and $s\neq r$ consider the following subgroups of $\St(\Phi, R)$:
$$\begin{array}{cclccc}
 \StP_r  (\Phi_l, R) & = & \{x_\alpha(\xi), m_r(\alpha)\geq 0\}, & \StL_r(\Phi_l, R) & = & \{x_\alpha(\xi), m_r(\alpha) = 0\}, \\
 \StU_r  (\Phi_l, R) & = & \{x_\alpha(\xi), m_r(\alpha) > 0\}, & \StU^-_r(\Phi_l, R) & = & \{x_\alpha(\xi), m_r(\alpha) < 0\}, \\
 \StU^-_{rs} (\Phi_l, R)& = & \StU^-_r \cap \StU^-_s.
\end{array}$$
Clearly $\StL_r(\Phi_l, R)$ normalizes both $\StU_r(\Phi_l, R)$ and $\StU^-_r(\Phi_l, R)$, 
hence $\StP_r(\Phi_l, R)$ admits the Levi decomposition: $$\StP_r(\Phi_l, R) = \StL_r(\Phi_l, R) \ltimes \StU_r (\Phi_l, R).$$
Denote by $\StU(\Phi, R)$ (respectively, by $\StU^-(\Phi, R)$) the subgroup spanned by $x_\alpha(\xi)$ for $\xi\in R$ and $\alpha\in\Phi^+$ (respectively, $\alpha\in\Phi^-$).
If the choice of $\Phi_l$ is clear from the context, we shorten the notation for $Z(\Phi_l, R)$ to $Z$, where 
$$Z=\GG, \E, \StU, \StU^-, \StP_r, \StL_r, \StU_r, \StU_r^-, \StU_{rs}^-.$$

\subsection[ssChev]{Representations of Chevalley groups.}
In the present article we work with the irreducible fundamental representations of $\GG(\Phi_l, R)$ corresponding to the highest weight $\varpi_j$,
i.\,e. $\varpi_1$ in the case of a classical $\Phi_l$ and $\varpi_l$ in the case $\Phi_l=\rE_l$. 
The former are the natural vector representations of the classical groups acting on the free modules $V=R^n$ of dimension $n=2l,2l+1,2l,2l$ for $\Phi=\rA_l,\rB_l,\rC_l,\rD_l$ respectively.
The latter act on the free modules $V=R^n$ of dimension $n=27, 56, 248$ for $l=6,7,8$ respectively.
All these representations are {\it basic} in the sense of \cite{PSV}.
Moreover, all the representations except for $(\rB_l, \varpi_1)$ and $(\rE_8, \varpi_8)$ are {\it microweight}, i.\,e. they do not have zero weights.

For classical groups we use the standard numbering of the weights of represen\-tations (cp.~\cite[§1B]{St}).
In particular, the coordinates of the elements of $V$ are indexed as follows:
$$\begin{array}{cll}
  1,2,\ldots, l+1 & \text{in the case} & \Phi =\rA_l, \\
  1,2,\ldots l, 0, -l,\ldots, -2, -1 & \text{in the case} & \Phi =\rB_l, \\
  1,2,\ldots l, -l,\ldots, -2, -1 & \text{in the cases}   & \Phi =\rC_l, \rD_l. \\
\end{array}$$

\begin{figure}\nonumber
\begin{tabular}{c}
$\xymatrix{\lwnode{1}\harr{1}&\lwnode{2}\harr{2}&\ldots&\lbnode{l}\harl{l-1}\harr{l}&\lwnode{0}\harr{l}&\lwnode{-l}\harr{l-1}&\ldots\harr{2}&\lwnode{-2}\harr{1}&\lwnode{-1}} $\\ \\
$\xymatrix{\lwnode{1}\harr{1}&\lwnode{2}\harr{2}&\ldots&\lbnode{l}\harl{l-1}\harr{l}&\lwnode{-l}\harr{l-1}&\ldots\harr{2}&\lwnode{-2}\harr{1}&\lwnode{-1}}$ \\ \\
$\xymatrix @R=.1in @C=.1in {\lwnode{1}\hardd{6}&&&&&&&&\lbnode{-1}\hardu{1} \\
&\lwnode{2}\hardd{5}&&&&&&\lbnode{-2}\haruu{}\hardu{}&&\lnode{}\hardu{3} \\
&&\lwnode{3}\hardd{4}&&\lwnode{5}\hardd{} && \lbnode{-3}\haruu{}\hardu{} && \lnode{}\haruu{}\hardu{}&&\lnode{}\hardu{4}&&\lnode{}\hardu{2} \\
&&&\lwnode{4}\hardd{3}\haruu{}&&\lbnode{-4}\haruu{}\hardu{}&&\lnode{}\haruu{}\hardu{}&&\lnode{}\haruu{}\hardu{}&&\lnode{}\haruu{}\hardu{}&&\lnode{}\hardu{4} \\
&&&&\lbnode{-5}\hardd{1}\haruu{}&&\lnode{}\harud{4}&&\lnode{}\harud{5}&&\lnode{}\haruu{}\hardu{}&&\lnode{}\harud{5}&&\lnode{}\hardu{3} \\
&&&&&\lnode{}\harud{2}&&&&&&\lnode{}\harud{6}&&&&\lnode{}\hardu{1} \\
&&&&&&&&&&&&&&&&\lnode{}}$
\end{tabular}
\caption{The weight diagrams of $(\rB_l,\varpi_1)$, $(\rC_l, \varpi_1)$, $(\rE_6,\varpi_6)$.}\label{e6_weights}\end{figure}

As for exceptional groups, for our purposes it suffices to refer explicitly the weights of the representations $(\rE_l, \varpi_l)$,
arising from subrepresentation $(\rD_{l-1}, \varpi_1)$ corresponding to the same highest weight $\varpi_l$.
For such weights we keep the natural numbering introduced above. 
Our partial num\-bering of the weights in the case $(\rE_6, \varpi_6)$ is illustrated in Figure~\ref{e6_weights}.

We denote by $v^+$ the {\it highest weight vector} of a representation.
Clearly, in the above notation the coordinate $(v^+)_1$ is equal to $1$ and all other coordinates are zero.

The following easy observation directly follows from the description of the action of elementary root unipotents on $V$ (see~\cite[Lemma~2.3]{Ma}, \cite[\S~4.3]{V}).
\begin{rem}\label{rootUnipAction} Let $\Phi_l=\rE_l$, $l=6,7,8$ and assume that $v\in V$ is such that all coordinates, except maybe $v_{1},\ldots, v_{l-1}$ are zero. Then $x_{-\alpha\ssub{1}}(\xi)$ fixes $v$ for any $\xi\in R$. \end{rem}

\subsection[ssStab]{Stability conditions.}
Recall that a column $(a_1,\ldots,a_n)^t\in R^{n}$ is called {\it unimodular} if $R a_1+\ldots+ R a_n=R.$
A unimodular column $(a_1,\ldots,a_{n+1})^t\in R^{n+1}$ is called {\it stable} if there exist 
$b_1,\ldots b_n\in R$ such, that the column $(a_1 + b_1a_{n+1}, a_2 + b_2a_{n+1},\ldots, a_n + b_na_{n+1})^t$ is unimodular.

By definition, the {\it stable rank} of $R$ is the smallest natural number $k$ for which any unimodular column of height $>k$ is stable.
If such $k$ does not exist we assume the stable rank of $R$ to be equal to $\infty$.

For a column $a=(a_1,\ldots, a_n)^t\in R^n$ denote by $\mathfrak{L}(a)$ the intersection of the left maximal ideals of $R$ containing $a_1, \ldots, a_n$.
Clearly, $u\in R^n$ is unimodular if and only if $\mathfrak{L}(u)=R$.

By definition, the {\it absolute stable rank} is the smallest natural $k$ such that for any column $a=(a_1,\ldots,a_{n+1})^t$ of height $n+1>k$ there exist $b_1,\ldots b_n$ such that
$$\mathfrak{L}(a_1+b_1a_{n+1},\ldots,a_n+b_na_{n+1})^t= \mathfrak{L}a.$$

The stable rank and the absolute stable rank of $R$ are denoted by $\sr(R)$ and $\asr(R)$ respectively. Clearly, $\sr(R)\leq\asr(R)$. 
If $R$ is commutative and its maximal spectrum is Noetherian of dimension $d$ (i.\,e. $\mathrm{dimMax}(R)=d$) then by \cite[Th.~2.3]{EO} one has $\asr(R)\leq d+1$.

The following lemma is a direct consequence of the definition of stable rank.
\begin{lemma} \label{simpleLemma} Let $\sr(R)\leq l-1$, then for any unimodular column $v\in R^l=V$ there exist
matrices $x=\left(\begin{smallmatrix} e_{l-1}& * \\ 0 & 1 \end{smallmatrix}\right)$, $y=\left(\begin{smallmatrix} e_{l-1}& 0 \\ *& 1 \end{smallmatrix}\right)$
such that the $l$-th cooordinate of the vector $yx\cdot v$ is zero.\end{lemma}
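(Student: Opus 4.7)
The plan is to apply the definition of stable rank directly, in two stages: first use $x$ to produce unimodularity in the top $l-1$ coordinates, then use $y$ to kill the last coordinate.

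More concretely, write $v = (v_1, \ldots, v_l)^t$. Since $v$ is unimodular of height $l$ and $\sr(R) \leq l-1$, by definition the column $v$ is stable. So there exist $b_1, \ldots, b_{l-1} \in R$ such that the column $(v_1 + b_1 v_l, \ldots, v_{l-1} + b_{l-1} v_l)^t$ is unimodular in $R^{l-1}$. Taking $x$ to be the upper-triangular matrix of the prescribed form with last column $(b_1, \ldots, b_{l-1}, 1)^t$, we observe that $x \cdot v = (w_1, \ldots, w_{l-1}, v_l)^t$ where the truncation $(w_1, \ldots, w_{l-1})^t$ is unimodular.

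For the second stage, I look for $y$ of the prescribed form whose last row is $(d_1, \ldots, d_{l-1}, 1)$, so that the last coordinate of $y x \cdot v$ equals $v_l + \sum_{i=1}^{l-1} d_i w_i$. Since $(w_1, \ldots, w_{l-1})^t$ is unimodular, I can pick $c_1, \ldots, c_{l-1} \in R$ with $\sum c_i w_i = 1$ and set $d_i = -v_l c_i$. Then the last coordinate vanishes, as required.

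No genuine obstacle is expected; the only thing to verify is that the matrices $x$ and $y$ have exactly the block shape specified in the statement, which is immediate from the construction. The lemma really amounts to a direct unpacking of the definition of stable rank together with one elementary use of unimodularity.
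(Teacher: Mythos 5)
Your proof is correct and is precisely the argument the paper has in mind: the paper states the lemma without proof, calling it ``a direct consequence of the definition of stable rank,'' and your two-stage construction (stability of $v$ to make the truncated column unimodular via $x$, then a unimodularity relation to clear the last coordinate via $y$) is the standard unpacking of that remark. No gaps.
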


Denote by $p$ the matrix which has $1$'s along its secondary diagonal and zeroes elsewhere. Call a matrix $a\in\M(l, R)$ {\it antipersymmetric} if $pa^tp = a$.
\begin{lemma}\label{LambdaS_lemma} Assume that $\asr(R)\leq l-1$, then for any columns $u^+, u^-\in R^l$ such that $(u^+, u^-)^t\in R^{2l}$ is unimodular there exists
an antipersymmetric matrix $a\in\M(l, R)$ such that $u^+ + au^- \in R^l$ is unimodular. \end{lemma}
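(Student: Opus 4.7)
The plan is to first recast the lemma in a more symmetric form, then apply the absolute stable rank condition iteratively to build the required matrix.

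First, I would reformulate the statement as one about symmetric matrices. Let $J := p$ denote the anti-diagonal permutation matrix, so $J^2 = e_l$ and $J^t = J$. A matrix $a \in \M(l, R)$ is antipersymmetric precisely when $s := aJ$ is symmetric: from $a = J a^t J$ one gets $a^t = J a J$, hence $(aJ)^t = J a^t = aJ$. Setting $v := J u^-$ yields $au^- = sv$, and since $J$ is invertible, unimodularity of $(u^+, u^-)^t \in R^{2l}$ is equivalent to that of $(u^+, v)^t$. The lemma therefore becomes: given a unimodular column $(u^+, v)^t \in R^{2l}$ with $\asr(R) \leq l - 1$, find a symmetric $s \in \M(l, R)$ such that $u^+ + sv$ is unimodular.

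For the main construction, my plan is to build $s$ by invoking the absolute stable rank condition on carefully chosen auxiliary columns. The column $(u^+, v)^t$ has height $2l$, and since $\asr(R) \leq l - 1$, any column of height at least $l$ can be shortened by one entry while preserving $\mathfrak{L}$ (and hence unimodularity). I would apply this reduction to columns of height $l + 1$ formed from entries of $u^+$, selected entries of $v$, and partial data of $s$ already constructed, choosing the entries of $s$ either singly (on the diagonal) or in symmetric pairs $s_{ij} = s_{ji}$ (off the diagonal). Each such choice corresponds to adjusting one or two rows of $u^+ + sv$ by appropriate multiples of entries of $v$.

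The main obstacle I anticipate is the coupling enforced by the symmetric constraint: each off-diagonal entry $s_{ij}$ simultaneously affects two rows, so the naive row-by-row asr reduction is not directly applicable. Managing this likely requires processing the entries of $s$ in a carefully chosen order, e.g. the border entries first and working inward, and matching them to reductions of columns designed to accommodate the two simultaneous modifications. The absolute stable rank hypothesis provides enough room for this, since $(u^+, v)^t$ has height $2l$ while only $l - 1$ is needed for a single asr step; I expect the detailed argument to parallel the symplectic-type stability arguments in \cite{St}, with $\asr$ taking the place of the ordinary stable rank.
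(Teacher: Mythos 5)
Your reformulation in terms of symmetric matrices is correct: with $J=p$ one indeed has that $a$ is antipersymmetric iff $s=aJ$ is symmetric, and the substitution $v=Ju^-$ converts the claim into: for $(u^+,v)^t$ unimodular of height $2l$ and $\asr(R)\leq l-1$, there is a symmetric $s$ with $u^++sv$ unimodular. This is an accurate identification of what has to be proved, and your observation that the essential difficulty is the coupling imposed by $s_{ij}=s_{ji}$ (each off-diagonal choice perturbs two rows at once) is exactly right. The problem is that your proposal stops there: the ``carefully chosen order'' of processing entries, the choice of auxiliary columns of height $l+1$, and the verification that $\mathfrak{L}$ is preserved through the coupled modifications are never exhibited, and the final sentence explicitly defers the argument to an expected parallel with the symplectic stability proofs in \cite{St}. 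That deferred step is not a routine verification; it is the entire content of the lemma, and a naive row-by-row application of the $\asr$ condition does fail for precisely the reason you name. As written, the proposal is a plan with the hard part missing, not a proof.

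For comparison, the paper does not attempt this induction at all: it observes that the statement is a special case of the stability theorem of Bak--Petrov--Tang (\cite[Th.~1.2]{BPT}) applied to $R$ viewed as a form ring with trivial involution and zero form parameter, so that the antipersymmetric matrices are exactly the relevant congruence matrices in that setting. If you want to complete your argument without redoing that machinery, the honest options are either to cite \cite{BPT} (or Vaserstein's unitary stabilization \cite{Vas1}, where the analogous induction on $\asr$ is carried out in full), or to actually write out the induction, which amounts to reproving the relevant lemma of those papers in the special case at hand.
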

\begin{proof} The statement of the lemma is a special case of \cite[Th.~1.2]{BPT} applied to $R$ viewed as a form ring with trivial involution and zero form parameter. \end{proof}

Denote by $H$ the hyperbolic embedding $H:\GL(l, R)\rightarrow\mathrm{O}(2l, R)$ of a general linear group into the even-dimensional orthogonal group,
i.\,e. the map which sends $g$ to $\left(\begin{smallmatrix} g & 0 \\ 0 & p(g^{t})^{-1}p \end{smallmatrix}\right)$.
\begin{lemma}\label{asrLemma} Assume that $\asr(R)\leq l-1$. Then for any unimodular column $v\in R^{2l}=V$ there exist orthogonal matrices 
$x=\left(\begin{smallmatrix} e_l & * \\ 0 & e_l \end{smallmatrix}\right)$, $y=\left(\begin{smallmatrix} e_l & 0 \\ * & e_l \end{smallmatrix}\right)$
such that $(yx\cdot v)_k=0$ for $k=-l,\ldots, -1.$ \end{lemma}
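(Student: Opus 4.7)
The plan is to build $x$ and $y$ in sequence, matching the order in the statement. Split the unimodular column as $v = (u^+, u^-)^t$ with $u^\pm \in R^l$. For the first factor I would invoke Lemma~\ref{LambdaS_lemma} directly: under the hypothesis $\asr(R) \le l-1$ it produces an antipersymmetric matrix $a \in \M(l, R)$ for which $w := u^+ + a u^-$ is unimodular in $R^l$. The defining condition $pa^tp = a$ is precisely what makes the block matrix $x := \bigl(\begin{smallmatrix} e_l & a \\ 0 & e_l \end{smallmatrix}\bigr)$ preserve the split hyperbolic form and so lie in $\mathrm{O}(2l, R)$. Replacing $v$ by $xv$, I may assume from now on that $u^+$ is unimodular.

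For the second factor it suffices to produce an antipersymmetric $b \in \M(l, R)$ satisfying $bu^+ = -u^-$; the block matrix $y := \bigl(\begin{smallmatrix} e_l & 0 \\ b & e_l \end{smallmatrix}\bigr)$ is then orthogonal and sends $(u^+, u^-)^t$ to $(u^+, 0)^t$, as the lemma demands. To construct such $b$, I would begin with the rank-one guess $b_0 := -u^- c^t$, where $c \in R^l$ is chosen so that $c^t u^+ = 1$ (possible because $u^+$ is unimodular). This $b_0$ solves $b_0 u^+ = -u^-$ but generally fails the antipersymmetry relation $p b^t p = b$. The idea is to symmetrise $b_0$ with respect to the involution $a \mapsto p a^t p$ and then cancel the resulting error terms by adding a further antipersymmetric correction of the shape $u^- d^t + p d (p u^-)^t$ for a suitable $d \in R^l$, using the unimodularity of $u^+$ and, where necessary, the hypothesis $\asr(R) \le l-1$ once more.

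The main obstacle is this second stage. Lemma~\ref{LambdaS_lemma} only makes $u^+ + a u^-$ unimodular, not zero, so one cannot simply exchange the roles of $u^+$ and $u^-$ and reapply it to kill the bottom half of $v$. The extra rigidity forced by $p b^t p = b$ has to be handled by direct linear algebra, carefully balancing the symmetrisation and the correction term. Once such a $b$ has been produced, the composition $yx$ is orthogonal of the required block form and by construction annihilates the coordinates $v_{-l}, \ldots, v_{-1}$ of $v$, which is what the lemma asserts.
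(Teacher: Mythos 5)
Your first step is identical to the paper's: Lemma~\ref{LambdaS_lemma} supplies the upper unipotent block $x$ making the top half $u^+$ of $x\cdot v$ unimodular. The gap is in your second stage, which you yourself flag as ``the main obstacle'' and never carry out. The paper does not attempt to solve $bu^+=-u^-$ for a general unimodular $u^+$ by symmetrising a rank-one ansatz. Instead it uses the transitivity of $\E(l,R)$ on unimodular columns of height $l$ (available because $\sr(R)\leq\asr(R)\leq l-1$) to choose $g\in\E(l,R)$ with $H(g)x\cdot v=(1,0,\ldots,0,*,\ldots,*)^t$; for the column $e_1$ the required lower unipotent $y'$ is immediate, since only the first column of its lower-left block is prescribed and the remaining entries can be filled in to satisfy the (anti)persymmetry constraint; finally $y:=y'^{H(g)}$ is again of the shape $\left(\begin{smallmatrix} e_l & 0\\ * & e_l\end{smallmatrix}\right)$ because $H(g)$ is block-diagonal, and $yx\cdot v=H(g)^{-1}e_1$ has its last $l$ coordinates equal to zero. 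This hyperbolic conjugation device is precisely the idea your sketch is missing.

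Moreover, the ``direct linear algebra'' you defer to cannot succeed for an arbitrary unimodular $v$: if $\left(\begin{smallmatrix} e_l & 0\\ b & e_l\end{smallmatrix}\right)$ is orthogonal then $pb$ is alternating, hence $(u^+)^t p\,b\,u^+=0$, and the equation $bu^+=-u^-$ would force $(u^+)^t p\,u^-=q(v)=0$. So the existence of your $b$ is obstructed by the value of the quadratic form on $v$, an invariant that no choice of the vectors $c$ and $d$ in your correction terms can alter. (In the only application of the lemma, $v=\varphi(a)\cdot v^+$ lies in the orbit of the highest weight vector and is isotropic; the same vanishing is what makes the corner entry of the paper's $y'$ compatible with the alternating condition.) As written, your second stage is therefore not merely incomplete but, for non-isotropic $v$, impossible; to repair it you need both the isotropy input and a mechanism --- such as the reduction of $u^+$ to $e_1$ by conjugating with $H(g)$ --- for taming the antipersymmetry constraint.
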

\begin{proof}
 Applying Lemma~\ref{LambdaS_lemma} we obtain $x=\left(\begin{smallmatrix} e_l & a \\ 0 & e_l \end{smallmatrix}\right)$ such
 that the first $l$ coor\-dinates of $x\cdot v$ form a unimodular column of height $l$.
 
 The group $\E(l, R)$ acts transitively on the set of unimodular columns of height $l$, therefore
 we can choose an appropriate $g\in\E(l, R)$ such that $$H(g)x\cdot v= (1,0,\ldots, 0,*,\ldots *)^t,$$
 where the number of stars equals $l$.
 Choose $y'=\left(\begin{smallmatrix} e_l & 0 \\ * & e_l \end{smallmatrix}\right)$ such that $y'H(g)x\cdot v = e_1 = (1,0,\ldots, 0)^t$.
 Clearly $y=y'^{H(g)}$ is an orthogonal matrix of the required form and $$yx\cdot v = y'^{H(g)} x\cdot v = H(g)^{-1} e_1 = (*,\ldots, *,0,\ldots, 0)^t.$$
 \end{proof}

\section {Proof of the main results.} \label {mainPart}

The main ingredient needed in the proof of the Theorem~\ref{prestBC} is the following factorization of the Steinberg group.
\begin{prop}\label{DVfactor} Let $\Phi_l$, $R$, $i$, $j$ be as in the statement of \ref{prestBC}. Then one has the following factorization
\begin{equation}\label{DVdecomp} \St(\Phi_l, R) =  \StU \cdot \StU^- \cdot \StL_i \cdot \StP_j.\end{equation} \end{prop}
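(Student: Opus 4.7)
The factorization is equivalent to saying that every element $g\in\St(\Phi_l,R)$ admits a representation of the form $u v \ell p$ with $u\in\StU$, $v\in\StU^-$, $\ell\in\StL_i$ and $p\in\StP_j$, and the plan is to establish this via the action of $\St(\Phi_l,R)$ on the fundamental representation $V = V(\varpi_j)$ of $\GG(\Phi_l,R)$. First I would observe that $\StP_j$ acts trivially on the highest weight vector $v^+\in V$: each generator $x_\alpha(\xi)$ with $m_j(\alpha)>0$ raises the weight beyond $\varpi_j$ and therefore fixes $v^+$, while every generator of $\StL_j$ (having $m_j(\alpha)=0$) acts as a unipotent operator on the one-dimensional $\varpi_j$-weight space and hence also as the identity. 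Under the hypotheses of the proposition the converse also holds, i.e.\ the stabilizer of $v^+$ in $\St(\Phi_l,R)$ coincides with $\StP_j$; granting this, the factorization reduces to the orbit-theoretic statement that $\StU\cdot\StU^-\cdot\StL_i$ acts transitively on $\St(\Phi_l,R)\cdot v^+\subseteq V$, so that for every $w=g\cdot v^+$ one can find $u\in\StU$, $v\in\StU^-$ and $\ell\in\StL_i$ with $\ell v u\cdot w = v^+$.

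The heart of the proof is this geometric reduction. For $\Phi_l = \rB_l, \rC_l$ we have $v^+ = e_1$ and $\StL_i = \StL_l$ is of type $\rA_{l-1}$, acting on the upper block $(w_1,\dots,w_l)^t$ by $\E(\rA_{l-1})$-column operations (and on the lower block by the dual). The plan proceeds in three stages. First, if the upper block is not already unimodular, I would use an element of $\StU^-_j\subseteq\StU^-$ — acting via negative long roots $-e_1+e_j$ (and their $\rC_l$-analogues) — to transfer mass from the lower block so as to make the upper block unimodular; this is feasible because $w$ as a whole is unimodular. Second, repeated application of Lemma~\ref{simpleLemma}, made available by $\sr(R)\le l-1$, inside $\StL_l$ reduces the upper block to $(1,0,\dots,0)^t$; the orthogonality or symplectic condition on $w$ then forces it into the form $e_1 + v'$ with $v'$ perpendicular to the weight vector $e_{-1}$. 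Third, the abelian opposite unipotent radical $\StU^-_1\subseteq\StU^-$ acts transitively on vectors of this shape, and a final application clears $v'$ and yields $v^+$.

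The exceptional case $\Phi_l = \rE_l$ is handled by the same blueprint, but executed inside the $(\rD_{l-1},\varpi_1)$-sub\-re\-pre\-sen\-tation of $V(\varpi_l)$ that contains $v^+$. The role of Lemma~\ref{simpleLemma} is played by Lemma~\ref{LambdaS_lemma} and Lemma~\ref{asrLemma}: the hypothesis $\asr(R)\le l-2$ is precisely what is needed to reduce the restriction of $w$ to this $2(l-1)$-dimensional sub\-re\-pre\-sen\-tation to its own highest weight vector, using only operations in $\StL_1$ (of type $\rD_{l-1}$) and in $\StU^-$. Remark~\ref{rootUnipAction} is then invoked to clean up the coordinates outside the $\rD_{l-1}$-subrepresentation by an element of $\StU^-$ without disturbing what has already been achieved, since the leftover root unipotent $x_{-\alpha_1}(\xi)$ fixes any vector supported on the first $l-1$ coordinates only. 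The hardest point throughout will be this bookkeeping: one has to keep all intermediate manipulations strictly inside the factors $\StU$, $\StU^-$, $\StL_i$ and in the prescribed left-to-right order, whereas the natural column-reduction procedures from Lemmas~\ref{simpleLemma} and~\ref{asrLemma} a priori mix the factors; for $\rE_8$ the existence of non-trivial weight multiplicities in $(\rE_8,\varpi_8)$ further complicates this accounting.
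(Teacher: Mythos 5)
Your plan hinges on an orbit--stabilizer reduction: you propose to ``grant'' that the stabilizer of $v^+$ in $\St(\Phi_l,R)$ coincides with $\StP_j$ and then deduce the factorization from transitivity of $\StU\cdot\StU^-\cdot\StL_i$ on the orbit of $v^+$. This is the genuine gap, and it is not a repairable technicality. First, the action of $\St(\Phi_l,R)$ on $V$ factors through $\varphi$, so the stabilizer of $v^+$ contains all of $\Ker\varphi=\K_2(\Phi_l,R)$, which has no reason to lie in $\StP_j$; the literal claim is false. Second, even after passing to $\E(\Phi_l,R)$, the assertion that every elementary element fixing $v^+$ lies in $\varphi(\StP_j)$ is an injective-stability statement of essentially the same depth as the proposition you are trying to prove: in the paper the analogous stabilizer-type facts (Lemma~\ref{DV_reduction}, and ultimately Theorem~\ref{prestBC}) are \emph{derived from} the decomposition~\ref{DVdecomp}, not used to establish it. So your reduction is circular. (A smaller point: $\StU\cdot\StU^-\cdot\StL_i$ is only a set-product, not a subgroup, so even the transitivity half would in practice be proved by an induction over generators of the same kind the paper runs directly on the decomposition.)

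The paper's actual argument avoids the stabilizer question entirely. It shows that the set $Y=\StU\cdot\StU^-\cdot\StL_i\cdot\StP_j$ contains the identity and is stable under left multiplication by every generator $x_{\pm\alpha_k}(\xi)$, hence equals $\St(\Phi_l,R)$. The only nontrivial case is $X_{-\alpha_i}$, and here the stability hypotheses enter exactly where you put them --- Lemmas~\ref{simpleLemma} and~\ref{asrLemma} --- but for a different purpose: they are used (Lemma~\ref{part1}) to replace the $\StL_i$-factor $a$ by an element of the normalized set $\widetilde{S}$, absorbing the correcting terms into $\StU$ and $\StU^-$. The representation and the highest weight vector are then used only once, in Lemma~\ref{auxFacts}\ref{part3}, to show $(X_{-\alpha_i})^a\subseteq\StL_j\subseteq\StP_j$ for $a\in\widetilde{S}$, which together with the commutator shuffles of Lemma~\ref{auxFacts}\ref{part2} closes the induction. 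Your second and third paragraphs correctly identify these ingredients (including Remark~\ref{rootUnipAction} for the $\rE_l$ case), but they are deployed in service of a reduction that does not go through; to fix the proof you would need to replace the orbit--stabilizer framework by a generator-closure argument of this type.
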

From the Levi decomposition it follows that the decomposition~\ref{DVdecomp} can be rewritten in the form $\St(\Phi_l, R) = \StP_i\cdot \StU^-_{ij}\cdot \StP_j,$ 
i.\,e. the Proposition~\ref{DVfactor} can be thought of as an analogue of the ''Dennis---Vaserstein decomposition`` in the sense of~\cite{ST} (cf.~Lemma~2.1).

\begin{rem}
 The cases $\Phi_l=\rB_l,\rC_l$ of the factorization \ref{DVfactor} directly follow from \cite[Th.~2.5]{St}.
 Nevertheless, we reprove these cases below for the sake of self-containedness.
\end{rem}
The group $\StL_i$ coincides with the image of the stabilization map $\theta_{\Delta_{i}\hookrightarrow\Phi_l}$.
Denote by $\widetilde{S}$ the subset of $\StL_i$ consisting of elements $g\in\StL_i$ such that
\begin{itemize}
\item the coordinate $(\varphi(g)\cdot v^+)_l$ is zero in the cases $\Phi_l = \rB_l,\rC_l$;
\item the coordinates $(\varphi(g)\cdot v^+)_k$ are zero for $k=-1,\ldots,-(l-1)$ in the case $\Phi_l = \rE_l$.
\end{itemize}
In Figure~\ref{e6_weights} these coordinates are marked with black circles.

\begin{lemma}\label{part1}
For every $a\in \StL_i$ there exist $x\in \StL_i\cap \StU,$ $y\in\StL_i\cap \StU^-$ such that $yxa \in \widetilde{S}.$
\end{lemma}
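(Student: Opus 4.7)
My plan is to treat the two cases of Theorem~\ref{prestBC} in parallel, reducing each to one of the stability lemmas of Section~\ref{not}. Set $v = \varphi(a)\cdot v^+$. In each case, the Levi subgroup $\StL_i$ preserves a distinguished subspace $W \subseteq V$ containing $v^+$, on which it acts through the natural vector representation of the Chevalley group of $\Delta_i$. For $\Phi_l = \rB_l, \rC_l$ (so $i = l$, $\Delta_l = \rA_{l-1}$), $W$ is spanned by the basis vectors at positions $1, \ldots, l$, and $\StL_l$ acts via $\SL_l$. For $\Phi_l = \rE_l$ (so $i = 1$, $\Delta_1 = \rD_{l-1}$), $W$ is the natural $2(l-1)$-dimensional $\rD_{l-1}$-subrepresentation spanned by the basis vectors indexed $1, \ldots, l-1, -(l-1), \ldots, -1$ (the orbit of $v^+$ under $\StL_1$, visible in Figure~\ref{e6_weights} for $\rE_6$). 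Since $a$ acts on $W$ through an invertible matrix carrying the highest weight vector $v^+$ (the first basis vector of $W$) to $v$, the vector $v$ is unimodular in $W$.

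In Case~(1) I would apply Lemma~\ref{simpleLemma} (using $\sr(R) \leq l-1$) to obtain matrices $\bar{x}, \bar{y} \in \E(l, R)$ of the prescribed upper/lower-elementary shape such that $(\bar{y}\bar{x} \cdot v)_l = 0$. In Case~(2) I would apply Lemma~\ref{asrLemma} to the natural $\rD_{l-1}$-representation on $W \cong R^{2(l-1)}$ (using $\asr(R) \leq l-2$) to obtain orthogonal matrices $\bar{x}, \bar{y}$ of the prescribed Siegel block-unipotent shape with $(\bar{y}\bar{x} \cdot v)_k = 0$ for $k = -(l-1), \ldots, -1$. In both cases, $\bar{x}$ (respectively $\bar{y}$) is by construction a product of elementary root unipotents $t_\alpha(\xi)$ with $\alpha$ a positive (respectively negative) root of $\Delta_i$. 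Lifting these products via the canonical surjection $\St(\Delta_i, R) \twoheadrightarrow \E(\Delta_i, R)$ and using the identification of $\St(\Delta_i, R)$ with $\StL_i \subseteq \St(\Phi_l, R)$ induced by $\theta^{\St}_{\Delta_i\hookrightarrow\Phi_l}$ produces elements $x \in \StL_i \cap \StU$ and $y \in \StL_i \cap \StU^-$. Since $\varphi$ is a homomorphism, $\varphi(yxa) \cdot v^+ = \bar{y}\bar{x} \cdot v$ has the coordinates prescribed by the definition of $\widetilde{S}$ equal to zero, so $yxa \in \widetilde{S}$.

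The main subtlety I anticipate lies in Case~(2): one must correctly identify the $2(l-1)$-dimensional $\rD_{l-1}$-subrepresentation of $V$ containing $v^+$, match its coordinate labeling with the $(1,\ldots,l-1,-(l-1),\ldots,-1)$ convention used in Lemma~\ref{asrLemma}, and check that the Siegel block-unipotent orthogonal matrices produced there decompose as products of root unipotents $x_{\pm\alpha}(\xi)$ supported on the unipotent radical of the Siegel parabolic of $\rD_{l-1}$ (hence on positive, respectively negative, roots of $\Phi_l = \rE_l$). Case~(1) is analogous but simpler, since the matrix form in Lemma~\ref{simpleLemma} already exhibits $\bar{x}$ and $\bar{y}$ as products of root unipotents indexed by positive (respectively negative) roots in $\rA_{l-1}$.
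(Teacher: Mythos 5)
Your proposal is correct and follows essentially the same route as the paper: set $v=\varphi(a)\cdot v^+$, apply Lemma~\ref{simpleLemma} for $\rB_l,\rC_l$ and Lemma~\ref{asrLemma} (in rank $l-1$, whence the hypothesis $\asr(R)\leq l-2$) for $\rE_l$, and lift $\bar x,\bar y$ to $\StL_i\cap\StU$ and $\StL_i\cap\StU^-$ using that $\varphi$ is an isomorphism on $\StU$ and $\StU^-$. The extra points you spell out (unimodularity of $v$ in the subrepresentation $W$, and that the block-unipotent matrices are products of root unipotents for positive, respectively negative, roots of $\Delta_i$) are exactly the details the paper leaves implicit.
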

\begin{proof}
 The restriction of the projection $\varphi$ to $\StU(\Phi_l, R)$ or $\StU^-(\Phi_l, R)$ is an isomoprhism, hence $x$ and $y$ are determined by the matrices $\varphi(x)$, $\varphi(y)$.
 It remains to apply Lemma~\ref{simpleLemma} in the cases $\Phi_l=\rB_l, \rC_l$ and Lemma~\ref{asrLemma} in the case $\Phi_l=\rE_l$ taking $v=\varphi(a)\cdot v^+$.
\end{proof}

\begin{lemma} \label{auxFacts} 
\begin{enumerate}[label=(\alph*)]
 \item\label{part2} The following inclusions hold
 \begin{equation}\nonumber
 \begin{array}{rcl}
 X_{-\alpha\ssub{i}}\cdot \StU &\subseteq & \StU \cdot X_{-\alpha\ssub{i}} \cdot X_{\alpha\ssub{i}}, \\
 X_{\alpha\ssub{i}}\cdot \StU^- &\subseteq & \StU^- \cdot X_{\alpha\ssub{i}} \cdot X_{-\alpha\ssub{i}}, \\
 X_{-\alpha\ssub{i}}\cdot \StU\cdot \StU^- &\subseteq & \StU \cdot \StU^- \cdot X_{\alpha\ssub{i}}\cdot X_{-\alpha\ssub{i}}. 
 \end{array}
 \end{equation}
 \item\label{part3} For any element $a$ of $\widetilde{S}$ one has $(X_{-\alpha\ssub{i}})^a\subseteq\StL_j$.
\end{enumerate} \end{lemma}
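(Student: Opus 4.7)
The plan is to prove part (a) using Chevalley commutator relations and part (b) via a direct computation of $a^{-1}x_{-\alpha\ssub{i}}(\xi)a$ in the fundamental representation $V$.

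For part (a), the key root-theoretic observation is that for any $\beta\in\Phi^+\setminus\{\alpha_i\}$ and $p,q\geq 1$, whenever $q\beta-p\alpha_i$ is a root it must be positive: its $k$-th coordinate (for $k\neq i$) equals $q\cdot m_k(\beta)\geq 0$, and the vanishing of all these coordinates would force $\beta$ to be a positive multiple of $\alpha_i$, i.e.\ $\beta=\alpha_i$. Thus relation~\eqref{R2} yields $[x_{-\alpha\ssub{i}}(\xi),x_\beta(\eta)]\in\StU$ for every such $\beta$. The same reasoning shows that $\StU'=\langle x_\beta(\eta):\beta\in\Phi^+\setminus\{\alpha_i\}\rangle$ is normal in $\StU$, giving the decomposition $\StU=\StU'\cdot X_{\alpha\ssub{i}}$. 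Iterating the commutator identity lets me push $x_{-\alpha\ssub{i}}(\xi)$ past the $\StU'$-factor with all corrections staying in $\StU$, so $x_{-\alpha\ssub{i}}(\xi)\cdot\StU'\subseteq\StU\cdot x_{-\alpha\ssub{i}}(\xi)$; the first inclusion follows. The second is the mirror statement obtained by swapping positive and negative roots, and the third is obtained by composing the first two and observing that $X_{-\alpha\ssub{i}}\cdot\StU^-=\StU^-$.

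For part (b), I would compute $a^{-1}x_{-\alpha\ssub{i}}(\xi)a$ directly in $V$. Since $a\in\StL_i$, the matrix $\varphi(a)$ respects the $\varpi_i^{\vee}$-grading of $V$ and is block-diagonal; the root element $\varphi(x_{-\alpha\ssub{i}}(\xi))$ is an explicit elementary transformation linking adjacent graded components. Straightforward matrix multiplication expresses $\varphi(a)^{-1}\varphi(x_{-\alpha\ssub{i}}(\xi))\varphi(a)$ as a product of elementary root unipotents whose coefficients are linear (or at most quadratic, when $\StU_i$ is non-abelian) in those entries of $\varphi(a)$ that describe its action on $v^+$ and the neighboring weight spaces. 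In the $\rB_l,\rC_l$ cases $\varphi(a)=\operatorname{diag}(g,\ldots,p(g^t)^{-1}p)$ for $g\in\GL(l,R)$, and the conjugate takes the form $\prod_k x_{-\epsilon_k}(g_{l,k}\xi)\cdot(\text{quadratic corrections})$; the single $\widetilde S$-condition $g_{l,1}=(\varphi(a)v^+)_l=0$ then annihilates exactly the contributions from roots with $m_1\neq 0$, leaving a product in $\StL_j$. The $\rE_l$ case proceeds similarly via the $\rD_{l-1}$-branching of $V$: the coordinate conditions $(\varphi(a)v^+)_k=0$ for $k=-1,\ldots,-(l-1)$ cut out exactly the entries of $\varphi(a)$ whose non-vanishing would contribute root unipotents with $m_l\neq 0$ to the expansion.

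The main obstacle will be the $\rE_l$ case, especially $\rE_7$ and $\rE_8$ where $\StU_i$ is not abelian and the quadratic corrections must be tracked carefully. My strategy would be to reduce most of the computation to the $\rD_{l-1}$-subrepresentation of $V$ spanned by the weights labeled $1,\ldots,l,-l,\ldots,-1$, on which Remark~\ref{rootUnipAction} directly controls the action of $X_{-\alpha\ssub{1}}$, and to handle the remaining $\rE_l$-specific weights separately using the explicit action formulas for root unipotents recalled in Section~\ref{not}.
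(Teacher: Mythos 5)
Your part \ref{part2} is correct and is essentially the paper's own argument: the same subgroup $\StU'$ generated by $x_\beta(\eta)$, $\beta\in\Phi^+\setminus\{\alpha_i\}$, the same positivity observation about the roots $q\beta-p\alpha_i$ appearing in the commutator formula, the factorization $\StU=\StU'\cdot X_{\alpha\ssub{i}}$, and the same derivation of the second and third inclusions. Part \ref{part3} is where the gap is. What you offer there is a description of a computation rather than the computation itself: the decisive assertion --- that the $\widetilde S$-conditions ``annihilate exactly the contributions from roots'' outside $\StL_j$ --- is the statement of the lemma rewritten in coordinates, and nothing in the proposal establishes it. For $\rB_l,\rC_l$ the explicit conjugation is feasible (and your observation that the quadratic terms die because their coefficients contain the factor $g_{l,1}$ is the right one), but for $\rE_7$ and $\rE_8$ you would be conjugating inside the $56$- and $248$-dimensional representations and expanding the conjugate as a product over the $33$, respectively $78$, roots of the unipotent radical, with non-abelian corrections; you flag this as ``the main obstacle'' without resolving it, so the proof is incomplete exactly where it is hardest.

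The paper avoids the entire expansion with a three-step argument worth comparing with yours: (i) since $a\in\StL_i$ normalizes $\StU^-_i$, the Levi decomposition already gives $z=x_{-\alpha\ssub{i}}(\xi)^a\in\StU^-_i$ with no computation; (ii) the defining condition of $\widetilde S$ says precisely that $\varphi(a)\cdot v^+$ is a vector fixed by $t_{-\alpha\ssub{i}}(\xi)$ (this is Remark~\ref{rootUnipAction} and its classical analogue), whence $\varphi(z)\cdot v^+=\varphi(a)^{-1}\varphi(t_{-\alpha\ssub{i}}(\xi))\varphi(a)\cdot v^+=v^+$; (iii) an element of $\StU^-_i$ whose image fixes the highest weight vector must lie in $\StL_j$, by the triangular form of the action of root unipotents on $v^+$. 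Your closing remark that Remark~\ref{rootUnipAction} ``controls the action of $X_{-\alpha\ssub{1}}$'' on the $\rD_{l-1}$-subrepresentation is the right ingredient, but it should be applied once, to the single vector $\varphi(a)\cdot v^+$, to conclude that the conjugate fixes $v^+$ --- not used to organize an entry-by-entry expansion of the conjugate. To repair your proof, replace the matrix computation by steps (i)--(iii); as written, the $\rE_l$ case of part \ref{part3} is not proved.
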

\begin{proof}
Denote by $\StU'$ the subgroup generated by the elements $x_\alpha(\xi)$ for $\alpha\in\Phi^+\setminus\{\alpha_i\}$, $\xi\in R$.
Let $u$ be an arbitary element of $\StU$. Express $u$ as a product $v x_{\alpha\ssub{i}}(\zeta)$ for 
some $v\in \StU'$, $\zeta\in R$. From the Chevalley commutator formula~\ref{R2} it follows that ${v}^{x_{-\alpha\ssub{i}}(\eta)}\in\StU$.
Thus $$x_{-\alpha\ssub{i}}(\eta) u = {v}^{x_{-\alpha\ssub{i}}(-\eta)}\cdot x_{-\alpha\ssub{i}}(\eta)\cdot x_{\alpha\ssub{i}}(\zeta) 
\in \StU \cdot X_{-\alpha\ssub{i}}\cdot X_{\alpha\ssub{i}}.$$
The second inclusion can be demonstrated in a similar way. The third inclusion follows from the first two.

From the Levi decomposition it follows that $z=x_{-\alpha\ssub{i}}(\xi)^a$ lies in $\StU^-_i$.
On the other hand, the element $t_{-\alpha\ssub{i}}(\xi)$ fixes $a\cdot v^+$ (cf.~Remark~\ref{rootUnipAction}), hence $v^+$ is fixed by $\varphi(z)$.
From the description of the action of elementary root unipotents on $V$ (see \cite[Lemma~2.3]{Ma}, \cite[\S~4.3]{V}) we conclude that $z\in \StU^-_i\cap\StL_j$ as claimed.
\end{proof}

\begin{proof}[Proof of the Proposition~\ref{DVfactor}.]  

Call a decomposition $g = u\cdot v\cdot a\cdot p$ of the form~\ref{DVdecomp} {\it reduced} if $a\in \widetilde{S}$.
Let us check that our stability assumptions imply that every such $g$ can be rewritten as a reduced decomposition.
Indeed, take $g=u\cdot v\cdot a\cdot p\in \StU \cdot \StU^- \cdot \StL_i \cdot \StP_j$.
Without loss of generality we may assume that $u\in \StU_i$, $v\in \StU^-_i$.
It remains to choose $x, y$ from the statement of Lemma~\ref{part1} and rewrite 
$$u\cdot v\cdot a\cdot p = u x^{-1} \cdot (x v x^{-1}) y^{-1} \cdot (yxa)\cdot p \in \StU \cdot \StU^- \cdot \widetilde{S} \cdot \StP_j.$$

Since the group $\St(\Phi_l, R)$ is spanned by the root subgroups $X_{\pm\alpha_k}$, $1\leq k\leq l$ it suffices to show
that the decomposition~\ref{DVdecomp} is stable under left multiplication by $X_{-\alpha\ssub{k}}$.
For $k\neq i$ this easily follows from the Levi decomposition.

Now take $k=i$ and let $a$ be an arbitary element of $\widetilde{S}$.
In view of Lemma~\ref{auxFacts} we have
$$ X_{-\alpha\ssub{i}}\cdot \StU\cdot \StU^-\cdot a\cdot \StP_j \subseteq 
\StU\cdot \StU^-\cdot X_{\alpha\ssub{i}}\cdot a\cdot (X_{-\alpha\ssub{i}})^a\cdot \StP_j \subseteq
\StU\cdot \StU^-\cdot a\cdot \StP_j.$$ \end{proof} 

Set $G'= \GG(\Delta_j, R)$. We identify $G'$ with the corresponding subgroup of $\GG(\Phi_l, R)$.
The proof of the following technical statement is similar to~\cite[Th.~3.1]{St}.
\begin{lemma}\label{DV_reduction} Assume that $\St(\Phi_l, R)$ admits decomposition~\ref{DVdecomp}.
Then one has \begin{equation}\label{DV_red} \St(\Phi_l, R)\cap \varphi^{-1}(G') \subseteq (\StL_i(\Phi_l, R)\cap \varphi^{-1}(G')) \cdot \StL_j(\Phi, R).\end{equation} \end{lemma}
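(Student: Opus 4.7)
The plan is to follow the strategy of \cite[Th.~3.1]{St}. Take $g \in \St(\Phi_l, R) \cap \varphi^{-1}(G')$. By Proposition~\ref{DVfactor} we write $g = u v a p$ with $u \in \StU$, $v \in \StU^-$, $a \in \widetilde{S}$, $p \in \StP_j$; splitting $p = l' q'$ via the Levi decomposition $\StP_j = \StL_j \ltimes \StU_j$ and using $\varphi(\StL_j) \subseteq \E(\Delta_j, R) \subseteq G'$, the goal reduces to showing $u v a q' \in \StL_j$, from which the desired factorization follows by taking the $\StL_i$-factor to be the identity.

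The core of the argument is an analysis of $\varphi(g)$ acting on the highest weight vector $v^+$ of the fundamental representation $(\Phi_l, \varpi_j)$. The following ingredients are essential: (i) $G'$ fixes $v^+$ pointwise, because $\langle \varpi_j, \alpha_k^\vee\rangle = 0$ for every $\alpha_k \in \Delta_j$ (so the $\Delta_j$-torus acts trivially) and $\varpi_j - \beta$ is not a weight of $(\Phi_l, \varpi_j)$ for any $\beta \in \Delta_j \cap \Phi^+$ (so the positive $\Delta_j$-root unipotents also fix $v^+$); (ii) every positive-root unipotent of $\Phi_l$ fixes $v^+$, hence $\varphi(u) v^+ = \varphi(q') v^+ = v^+$; (iii) by the definition of $\widetilde{S}$ together with $a \in \StL_i$, the vector $w := \varphi(a) v^+$ lies in $\langle e_1, \ldots, e_{l-1}\rangle$ inside the appropriate natural-numbered subrepresentation (the whole vector representation for $\rB_l, \rC_l$, or the $(\rD_{l-1}, \varpi_1)$-subrepresentation for $\rE_l$, cf.\ Figure~\ref{e6_weights}). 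Combining (i)--(iii) reduces $\varphi(g) v^+ = v^+$ to $\varphi(v) w = v^+$. Matching coefficients weight-space by weight-space --- using the explicit action of root unipotents and the Chevalley commutator formula governing the two-step nilpotent structure of $\StU^-$ --- forces $w = v^+$ and forces every parameter of $v$ coming from a root subgroup $X_{-\beta}$ with $m_j(\beta) > 0$ to vanish. Hence $v \in \StU^- \cap \StL_j \subseteq \StL_j$.

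A dual argument --- applied to the lowest weight vector (which is $G'$-fixed in the $\rB_l, \rC_l, \rE_7, \rE_8$ cases; in the $\rE_6$ case, where $w_0 \neq -1$ and $v^-$ lies in a nontrivial $G'$-subrepresentation, one instead reapplies the reduced Dennis--Vaserstein factorization to $g^{-1}$) --- yields $u \in \StU \cap \StL_j \subseteq \StL_j$, forces $q' = e$, and pushes $a$ into $\StL_i \cap \StL_j \subseteq \StL_j$ by ruling out the ``parabolic tail'' contribution from $\StL_i \cap \StU_j$. Altogether $g = u v a l' \in \StL_j$, completing the proof. The main obstacle I expect is the weight-matching analysis of the second paragraph: while the $\rB_l, \rC_l$ case benefits from the two-step nilpotent structure of $\StU^-$ and the concreteness of the natural representation, the $\rE_l$ case requires careful bookkeeping via Figure~\ref{e6_weights} and the microweight structure of $(\Phi_l, \varpi_j)$, with $\rE_6$ demanding particular attention because the longest Weyl element does not act as $-1$.
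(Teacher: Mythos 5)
Your proposal goes wrong at the very first reduction. You claim that ``the goal reduces to showing $u v a q' \in \StL_j$, from which the desired factorization follows by taking the $\StL_i$-factor to be the identity,'' and you later assert that the weight-matching ``forces $w=v^+$'' and ``pushes $a$ into $\StL_i\cap\StL_j$.'' This is not a reduction of the lemma but a strictly stronger (and in general false) statement: it would give $\St(\Phi_l,R)\cap\varphi^{-1}(G')\subseteq\StL_j$, i.e.\ injective stability for $\K_1(\Delta_j,R)\to\K_1(\Phi_l,R)$ with no correction term, which would make Theorem~\ref{prestBC} vacuous and contradicts the known nontriviality of the prestabilization kernel (Theorem~\ref{prestA}, Corollary~\ref{earlyB}: the group $\widetilde{\E}(l,R)/\E(l,R)$ is nonzero in general, and it sits exactly inside $\StL_i\cap\varphi^{-1}(G')$ modulo $\StL_j$). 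The underlying technical error is that fixing the highest weight vector characterizes membership in $\StL_j$ only for elements of $\StU_i^{\pm}$, whose root parameters can be read off faithfully from the action on $v^+$; an element $a\in\StL_i$ with $\varphi(a)v^+=v^+$ (for instance any element of $\StL_i\cap\varphi^{-1}(G')$) need not lie in $\StL_j$ at all. So the step ``ruling out the parabolic tail'' and forcing $a\in\StL_i\cap\StL_j$ cannot be carried out.

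The paper's proof uses the same highest-weight ingredient but only to eliminate the unipotent radicals, never the Levi factor $\StL_i$. Concretely, it rewrites the Dennis--Vaserstein decomposition as $g=u_1\,l_1\,v_2\,u_2\,l_2$ with $u_1\in\StU_i$, $l_1\in\StL_i$, $v_2\in\StU_i^-$, $u_2\in\StU_i\cap\StU_j$, $l_2\in\StL_j$; the identity $\varphi(v_2)v^+=\varphi(l_1^{-1})v^+$ together with the grading of the weights by the coefficient of $\alpha_i$ forces $\varphi(v_2)v^+=v^+$, hence $v_2\in\StU_i^-\cap\StL_j$, giving $X\subseteq\StL_i\cdot\StU_i\cdot\StL_j$. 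The remaining positive unipotent part is then killed by applying the sign-change automorphism $\sigma$ ($\alpha\mapsto-\alpha$) and repeating the same argument, which yields $X\subseteq(\StL_i\cap\varphi^{-1}(G'))\cdot\StL_j$ --- with the $\StL_i$-factor surviving, as it must. Note also that the set $\widetilde{S}$ plays no role in this lemma (it is only needed to establish the decomposition of Proposition~\ref{DVfactor}), whereas your argument leans on $a\in\widetilde{S}$ in a way that still would not yield the overreaching conclusion you want. Your dual lowest-weight-vector argument is a reasonable substitute for $\sigma$ in spirit, but it cannot rescue the proof as long as you are trying to show $g\in\StL_j$.
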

\begin{proof}
Denote by $X$ the left hand side of the inclusion~\ref{DV_red}. Applying Proposition~\ref{DVfactor} we get that
\begin{equation}\nonumber X=(\StU_i \cdot \StU^-_i\cdot \StL_i\cdot \StP_j)\cap \varphi^{-1}(G') = (\StU_i\cdot\StL_i\cdot\StU^-_i\cdot\StU_{ij}\cdot\StL_j)\cap \varphi^{-1}(G').\end{equation}
Let $g$ be an arbitary element of $X$, write its decomposition $g = u_1\cdot l_1\cdot v_2\cdot u_2\cdot l_2$ for some $u_1$, $l_1$, $v_2$, $u_2$, $l_2$ belonging to the corresponding subgroups.

Since the elements $\varphi(g)$, $\varphi(u_1)$, $\varphi(u_2)$, $\varphi(l_2)$ fix $v^+$, we get that $\varphi(v_2)\cdot v^+ = \varphi(l_1^{-1})\cdot v^+.$
Comparing the coordinates of these vectors we conlcude that $\varphi(v_2)\cdot v^+ = v^+$, therefore $v_2\in\StU^-_i\cap \StL_j$ and
from the Levi decomposition it follows that $X \subseteq (\StL_i \cdot \StU_i \cdot \StL_j)\cap\varphi^{-1}(G').$

Denote by $\sigma$ the automorphism of $\St(\Phi_l, R)$ induced by the automorphism $\alpha\mapsto-\alpha$ of $\Phi_l$. 
Clearly $\sigma(X)\subseteq (\StL_i\cdot\StU^-_i\cdot\StL_j)\cap\varphi^{-1}(\GG(\Phi_l, R)).$

Consider an element  $g\in\sigma(X)$. As in the previous case, decompose $g=l_1' \cdot v_1'\cdot l_2'$ and notice that
$\varphi(v_1')\cdot v^+ = \varphi({l_1'}^{-1})\cdot v^+$, hence $v_1'\in\StL_i$ and
$X \subseteq (\StL_i\cdot\StL_j)\cap \varphi^{-1}(G') \subseteq  (\StL_i\cap \varphi^{-1}(G')) \cdot \StL_j$ as claimed.
\end{proof}

\begin{proof}[Proof of the Theorem~\ref{prestBC}.]
Applying $\varphi$ to both sides of~\ref{DV_red} we get the inclusion
\begin{multline}\nonumber
\E(\Phi_l, R)\cap \GG(\Delta_j, R) \subseteq (\E(\Delta_i, R)\cap \GG(\Delta_j, R))\cdot \E(\Delta_j, R) = \\ =
\theta_{\Delta_i\hookrightarrow\Phi\ssub{l}}(\E(\Delta_i, R)\cap\GG(\Delta_i\cap\Delta_j, R))\cdot \E(\Delta_j, R) \end{multline}
from which the theorem follows. \end{proof}

\section{Concluding remarks}\label{remarks}

We note that no explicit description of the stabilization kernel similar to Theorem~\ref{prestA} is known for $\Phi=\rD_l$.
Moreover, no result similar to Corollary~\ref{earlyB} may hold for $\Phi=\rD_l$ in general as the following negative result has been demonstrated in \cite{RBJ}.
\begin{thm} Let $R$ be a nonsingular affine algebra of dimension $d\geq 2$ over a perfect $C_1$-field. Asssume that $2\in R^*$.
\begin{enumerate} \item The bijectivity of the map $$\SO(2(d+1), R)/ \EO(2(d+1), R) \rightarrow \SO(2(d+2), R) / \EO(2(d+2), R)$$ implies the transitivity
of the action of $\E(d+1, R)$ on the unimodular columns of height $d+1$. \item There exists an algebra from the considered class of rings for which the latter condition fails.
\end{enumerate} \end{thm}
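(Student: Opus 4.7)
The plan is to treat the two parts of the theorem separately, since they are structurally quite different: part~(1) is an implication from $\K_1$-bijectivity to elementary transitivity on unimodular columns, while part~(2) is the construction of a specific counterexample.

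For part~(1), the approach would be as follows. Let $v \in R^{d+1}$ be an arbitrary unimodular column; choose a row $w$ with $wv=1$ (possible by unimodularity). Using a Vaserstein/Suslin-type construction, associate to the pair $(v,w)$ an explicit element $\sigma_{v,w} \in \SO(2(d+1), R)$ built out of hyperbolic reflections supported on $v$ and $w$ (roughly, an orthogonal analogue of the Suslin matrix). Under the dimension hypothesis $\dim R = d$ one has $\asr(R) \leq d+1$, so the standard orthogonal stability theorems apply to conclude that the image of $\sigma_{v,w}$ in $\SO(2(d+2), R)$ lies in $\EO(2(d+2), R)$. The injectivity half of the bijectivity hypothesis then forces $\sigma_{v,w} \in \EO(2(d+1), R)$ already. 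Using the Levi decomposition for the parabolic whose Levi factor is the hyperbolic copy $H(\GL(d+1, R))$ introduced in Section~\ref{not}, one would factor this elementary expression and track its action on the highest weight vector $v^+$; the result of this bookkeeping should exhibit an element $g \in \E(d+1, R)$ with $g \cdot v = e_1$, proving transitivity.

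For part~(2), the plan is to produce a single nonsingular affine algebra in the stated class for which $\E(d+1, R)$ is not transitive on unimodular columns of height $d+1$. A natural candidate is the coordinate ring of a suitable smooth affine variety over an algebraically closed base field (which is automatically $C_1$), to which one attaches a cohomological obstruction — for example, a non-trivial Euler class in the appropriate Chow--Witt group — associated to a specific unimodular column. That obstruction is $\E(d+1, R)$-invariant and vanishes on $e_1$, hence the column in question cannot be $\E(d+1, R)$-equivalent to $e_1$.

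The main obstacle is the extraction step in part~(1). Generators of $\EO(2(d+1), R)$ come from all roots of $\rD_{d+1}$ and mix the two halves of the hyperbolic splitting, whereas $\E(d+1, R)$ sits inside just one Levi factor. Controlling an arbitrary elementary expression for $\sigma_{v,w}$ — essentially writing it in a Dennis--Vaserstein normal form adapted to the parabolic stabilizing $v^+$, and then reading off the block that acts on $v$ — is the technical heart of the argument, and is precisely the kind of manipulation that the factorization results of Section~\ref{mainPart} are designed to facilitate. Part~(2), by contrast, is expected to reduce to invoking an already-established non-vanishing result for an Euler-class-type obstruction.
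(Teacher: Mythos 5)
First, a point of reference: the paper does not prove this statement at all --- it is quoted in Section~\ref{remarks} as a known negative result of Rao, Basu and Jose~\cite{RBJ}, cited precisely to explain why no analogue of Corollary~\ref{earlyB} can hold for $\Phi=\rD_l$. So there is no proof in the paper to compare yours against; what the text supplies is a citation. Judged on its own terms, your outline points in the same general direction as the argument of~\cite{RBJ} (relate the $\E(d+1,R)$-orbit of a unimodular column to a class in the kernel of the orthogonal stabilization map via the hyperbolic embedding), but it is a research plan rather than a proof: every load-bearing step is asserted rather than established.

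Concretely, for part (1): the element $\sigma_{v,w}$ is never defined, and its essential properties are all left open. That its image in $\SO(2(d+2),R)$ is elementary does not follow from ``standard orthogonal stability theorems'' --- surjective stability says nothing about a given element becoming elementary one level up; what one actually uses is that the column $v$ padded to height $d+2$ is reducible because $\sr(R)\leq d+1<d+2$, and this has to be fed into the explicit form of $\sigma_{v,w}$. The converse extraction, ``$\sigma_{v,w}\in\EO(2(d+1),R)$ implies $v\in e_1\cdot\E(d+1,R)$,'' is the entire mathematical content of the implication, and you defer it as ``bookkeeping''; it requires either a well-defined $\EO$-invariant extending $\sigma_{v,w}\mapsto[v]\in Um_{d+1}(R)/\E(d+1,R)$, or a prestabilization/normal-form theorem for $\EO(2(d+1),R)$ relative to the Levi factor $H(\GL(d+1,R))$ --- and Section~\ref{remarks} explicitly notes that no such description is known for type $\rD_l$. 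For part (2), the proposed counterexample over an algebraically closed base field cannot work: by Suslin's theorem, for a $d$-dimensional affine algebra over an algebraically closed field the group $\E(d+1,R)$ does act transitively on unimodular columns of height $d+1$, and the Euler-class-type obstruction you invoke vanishes there. The examples needed live over non-algebraically-closed $C_1$ fields such as $\overline{\mathbb{F}_p}(t)$, detected for instance by a non-trivial weak Mennicke symbol in the sense of~\cite{RK}; no such algebra is exhibited or verified in your write-up. The honest course here is to do what the paper does and cite~\cite{RBJ}, rather than claim a proof.
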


On the other hand, in fact, a much stronger version of the 2nd statement of Corollary~\ref{earlyB} holds for~$\Phi_l=\rC_l$ (see \cite[Th.~2]{BR1}).

We conclude the text with several problems related to the main result of the paper.
\begin{problem} Generalize Theorem~\ref{prestA} to the case of the quadratic group from \cite{BPT} using the subgroup $\widetilde{\EU}$ defined in \cite{Vas1} as an analogue of $\widetilde{\E}(n, R)$. \end{problem}
\begin{problem} Obtain a relative version of Theorem~\ref{prestBC}. \end{problem}
\begin{problem} In the assumptions of Theorem~\ref{prestBC} describe the kernel of $\theta_2''$. \end{problem}


\begin{thebibliography}{10}

\bibitem{BPT} {\sc A.~Bak, V.~A.~Petrov and G.~Tang}
\newblock {\em Stability for quadratic\/ $\K_1$.} 
\newblock {$\K$-theory. 30,} 1 (2003), 1--11.

\bibitem{BMS} {\sc H.~Bass, J.~Milnor and J.-P.~Serre}
\newblock {\em Solution of the congruence subgroup problem for the groups of type $\SL_n$ ($n\geq 3$) and $\Sp_{2n}$, ($n\geq 2$).} 
\newblock {Publ. Inst. Hautes Et. Sci., 33} (1967), 59--137.

\bibitem{Bs} {\sc H.~Bass}
\newblock {\em $\K$-theory and stable algebra.}
\newblock {Publ. Inst. Hautes Et. Sci., 22,} (1964), 5--60.

\bibitem{BR1} {\sc R.~Basu and R.~Rao}
\newblock {\em Injective Stability for $\K_1$ of classical modules.}
\newblock {J. Algebra 323,} 4, (2010), 867-–877.

\bibitem{Bou} {\sc N.~Bourbaki}
\newblock{Lie groups and Lie algebras. Chapters 4–6.}
\newblock{Springer-Verlag, Berlin} (2002).

\bibitem{EO} {\sc D.~R.~Estes, J.~Ohm}
\newblock{\em Stable range in commutative rings.}
\newblock{J.~Algebra 7,} 343--362 (1967).

\bibitem{Ka} {\sc W. van der Kallen}
\newblock {\em Vaserstein prestability theorem for commutative rings.}
\newblock {Comm. Algebra 15,} 3 (1987), 657--663.

\bibitem{Ma} {\sc H.~Matsumoto}
\newblock{\em Sur les sous-groupes arithmetiques des groupes semi-simples deployes.}
\newblock {Ann. Sci. Ecole Norm. Sup. 4,} 2 (1969), 1--62.

\bibitem{Pl} {\sc E.~Plotkin}
\newblock{\em On the stability of the\/ $\K_1$-functor for Chevalley groups of type\/ $\E_7$.}
\newblock{J. Algebra, 210,} (1998), 67--85.

\bibitem{PSV} {\sc E.~Plotkin, A.~Semenov, N.~A.~Vavilov}
\newblock{\em Visual basic representations: an atlas.}
\newblock{Internat. J. Algebra Computat., 8,} 1 (1998), 61--95.

\bibitem{RK} {\sc R.~Rao and W. van der Kallen}
\newblock {\em Improved stability for $\mathrm{SK}_1$ and $\mathrm{WMS}_d$ of a non-singular affine algebra.}
\newblock {Ast\'erisque 226} (1994), 411--420.

\bibitem{RBJ} {\sc R.~Rao, R.~Basu and S.~Jose}
\newblock {\em Injective stability for $\K_1$ of orthogonal group.}
\newblock {J. Algebra 323,} 2, (2010), 393--396.

\bibitem{Se} {\sc J.-P.~Serre}
\newblock {\em Galois cohomology.}
\newblock {Springer}, (2002), 210p.

\bibitem{St2} {\sc M.~R.~Stein}
\newblock {\em Generators, relations and coverings of Chevalley groups over commutative rings.}
\newblock {Amer. J. Math., 93,} 4 (1971), 965--1004.

\bibitem{St} {\sc M.~R.~Stein}
\newblock {\em Stability theorems for $\K_1$, $\K_2$ and related functors modeled on Chevalley groups.}
\newblock {Japan J. Math., 4,} 1 (1978), 77--108.

\bibitem{ST} {\sc A.~A.~Suslin, M.~S.~Tulenbaev}
\newblock{\em Stabilization theorem for the Milnor $\K_2$-functor.}
\newblock{J. Soviet Math., 17,} 2 (1981), 1804--1819.

\bibitem{Ta} {\sc G.~Taddei}
\newblock{\em Normalit\'e des groupes \'el\'ementaires dans les groupes de Chevalley sur un anneau.} 
\newblock {Applications of algebraic $\K$-theory to algebraic geometry and number theory, Part I, II (Boulder, Colo., 1983)., Providence, RI: Amer. Math. Soc., 1986., Vol. 55 of Contemp. Math.,} 693--710.

\bibitem{Vas0} {\sc L.~N.~Vaserstein}
\newblock{\em On the stabilization of the general linear group over a ring.}
\newblock {Math. USSR Sbornik, 8,} 3 (1969), 383--400.

\bibitem{Vas1} {\sc L.~N.~Vaserstein}
\newblock {\em Stabilization for unitary and orthogonal groups over a ring with involution,}
\newblock {Math. USSR Sbornik, 10,} 307--326.

\bibitem{V} {\sc N.~A.~Vavilov}
\newblock{\em Structure of Chevalley groups over commutative rings.}
\newblock {Nonassociative algebras and related topics (Hiroshima, 1990), World Sci., Publ., River Edge, NJ,} (1991), 219--335.

\bibitem{VP} {\sc N.~A.~Vavilov, E.~Plotkin}
\newblock {\em Chevalley groups over commutative rings: I. Elementary Calculations.}
\newblock {Acta Applicandae Math., 45,} (1996), 73--113.

\end{thebibliography}
\end{document}